\newtheorem{theorem}{Theorem}[section]
\newtheorem{corollary}[theorem]{Corollary}
\newtheorem{prop}[theorem]{Proposition}
\newtheorem{remark}[theorem]{Remark}
\renewenvironment{proof}%
	{\smallbreak\noindent {\bf Proof.\ }}%
	{\hskip 1em \rule[-2pt]{5pt}{10pt}

 \medbreak}
\def\1{\mbox{\boldmath $1$}} 
\def\a{\mbox{\boldmath $a$}}
\def\A{\mathbf{A}}
\def\B{\mathbf{B}}
\def\cs{\mathrm{cs}}
\def\D{\mathbf{D}}
\def\F{\mathbf{F}}
\def\M{\mathbf{M}}
\def\O{\mathbf{O}}
\def\P{\mathbf{P}}
\def\Q{\mathbf{Q}}
\def\I{\mathbf{I}}
\def\J{\mathbf{J}}
\def\s{\mathbf{s}}
\def\T{\mathbf{T}}
\def\C{\mathbb{C}}
\def\R{\mathbb{R}}
\def\Z{\mathbb{Z}}
\def\Tr{{\mathrm{Tr}}}
\renewcommand{\Re}{\mathrm{Re}}
\renewcommand{\Im}{\mathrm{Im}}
\def\Spec{{\mathrm{Spec}}}
\title[short title]{THE LIMIT THEOREM WITH RESPECT TO THE MATRICES ON NON-BACKTRACKING PATHS OF A GRAPH}
\title[The matrices on non-backtracking paths of a graph]{
The limit theorem with respect to the matrices on non-backtracking paths of a graph
%\footnote{Declarations of interest: none.}
}
\author{Takehiro Hasegawa}
\address[T. Hasegawa]{Department of Education, Shiga University, Otsu, Shiga 520-0862, JAPAN}
\email{thasegawa3141592@yahoo.co.jp}
\author{Takashi Komatsu}
\address[T. Komatsu]{Math.~Research Institute Calc for Industry, Minami, Hiroshima, 732-0816, JAPAN}
\email{ta.komatsu@sunmath-calc.co.jp}
\author{Norio Konno}
\address[N. Konno]{Department of Applied Mathematics, Faculty of Engineering, Yokohama National University, Hodogaya, Yokohama 240-8501, JAPAN}
\email{konno@ynu.ac.jp}
\author{Hayato Saigo}
\address[H. Saigo]{Nagahama Institute of Bio-Science and Technology, 1266, Tamura, Nagahama 526-0829, JAPAN}
\email{h\_saigoh@nagahama-i-bio.ac.jp}
\author{Seiken Saito}
\address[S. Saito]{
Division of Liberal Arts, 
Center for Promotion of Higher Education, 
Kogakuin University,
2665-1 Nakano, Hachioji, Tokyo 192-0015, JAPAN
}
\email{saito.seiken@cc.kogakuin.ac.jp}
\author{Iwao Sato}
\address[I. Sato]{Oyama National College of Technology, Oyama, Tochigi 323-0806, JAPAN}
\email{isato@oyama-ct.ac.jp}
\author{Shingo Sugiyama}
\address[S. Sugiyama]{Department of Mathematics, College of Science and Technology, Nihon University, Suruga-Dai, Kanda, Chiyoda, Tokyo 101-8308, JAPAN}
\email{sugiyama.shingo@nihon-u.ac.jp}
\date{\today}
\subjclass[2020]{Primary 05C38; Secondary 05C50, 11F30}
\keywords{
%zeta functions, 
non-backtracking paths, regular graphs, arcsine law\hspace{-0.05pt}}
\begin{document}
\begin{abstract}
We give a limit theorem with respect to the matrices related to non-backtracking paths 
of a regular graph. The limit obtained closely resembles the $k$th moments of the arcsine law. 
Furthermore, we obtain the asymptotics of the averages of 
the $p^m$th Fourier coefficients of the cusp forms related to  
the Ramanujan graphs defined by A. Lubotzky, R. Phillips and P. Sarnak.  
\end{abstract}

\maketitle

\section{Introduction}
In spectral graph theory related to probability theory and number theory, it is important to investigate the relations between non-backtracking paths in a graph and eigenvalues of its adjacency matrix (cf.\ \cite{Serre97, ABLS, Friedman}). 
In fact, Bordenave, Lelarge and Massouli\'e \cite{BLM2015, BLM2018} studied 
the spectrum of the {\it non-backtracking matrix} (or equivalently, {\it edge matrix}) 
for a generalization of the Erd\"os--R\'enyi graphs, 
and they proved that the generalization satisfies
{\it Graph theory Riemann hypothesis} with high probability
(cf.\ [5, Theorems 3 and 4]. See \cite{Terras-Stark07} or \cite[Eq.(8.6)]{Terras2011} 
for the definition of Graph theory Riemann hypothesis).
Stark and Terras proved that a regular graph satisfies Graph theory Riemann hypothesis 
if and only if it is a Ramanujan graph (cf.\ \cite[Corollary 1]{Stark-Terras96}).
This indicates that the Erd\"os--R\'enyi graph can be regarded
as an ``irregular" Ramanujan graph (cf.\ \cite{Terras-Stark07, Terras2011, Saito}). 
The purpose of this paper is to define and analyze a new kind of matrix to investigate relations between the non-backtracking paths without tails in a finite connected regular graph and the eigenvalues of its adjacency matrix. 
%Recently, Bordenave et.al \cite{BLM2015,BLM2018} studied the spectrum of the {\it nonbactracking matrix} (this matrix is also called the {\it edge matrix}  \cite{Terras2011}) 
%for the stochastic block model (a generalization of the Erd\"os--R\'enyi graphs). 
%They proved that the Erd\"os--R\'enyi graph satisfies the graph Riemann hypothesis with high probability and this implies that 
%the Erd\"os--R\'enyi graph can be regarded as  a ``irregular" Ramanujan graph (cf.\ \cite{Terras1999,Saito}).  
%The purpose of the paper is to define and analyze a new kind of matrix to investigate the relations between the non-backtracking paths without tails in a %finite connected regular graph and the eigenvalues of its adjacency matrix. 

Let $G$ be a finite connected (possibly irregular) graph, 
and let $N_m$ be the number of non-backtracking ``closed'' paths without tails (or equivalently, reduced cycles) in $G$ of length $m$ (see \eqref{Nm} below). 
The sequence $\{N_m\}_{m \ge 1}$ can be considered as a kind of  ``gap'' which indicates how $G$ is far from the universal covering tree of $G$, 
since $G$ is a tree if and only if $N_m=0$ for all $m$. 
To capture all $N_m$, the {\it Ihara zeta function} of $G$ is useful. 
The Ihara zeta function of $G$ is the generating function of $\{ N_m \}_{m \geq 1}$: 
\begin{align}\label{zeta-exp}
Z_G(u)=\exp\left( \sum_{m\ge 1} \dfrac{N_m}{m}u^m\right)
\end{align}
for $u\in \C$ such that $|u|$ is sufficiently small.
Note that $G$ is a tree if and only if $Z_G(u)=1$. 

In 1966, Ihara \cite{Ihara} defined the function $Z_G(u)$ for any regular graph $G$,
and proved that the reciprocal of $Z_G(u)$ is an explicit polynomial. 
After that, Hashimoto \cite{Hashimoto} and Bass \cite{Bass} extended Ihara's result to any graph (see Theorem \ref{Ihara-Bass} below). 
In 1996, Stark and Terras \cite{Stark-Terras96} introduced a matrix $\M_m$ related to the numbers of non-backtracking paths in any graph (cf.\ \cite[(2.7)]{Stark-Terras96}), 
and gave an elementary proof of Ihara's result for any graph. 
Moreover, Stark and Terras showed that
\[
\mathrm{Tr}(\M_m)=N_m
\]
for any graph. By this equation, $\M_m$ can be considered as a matrix generalization of $N_m$. 
Note that, for a regular graph, Ihara had already considered the matrix $\M_m$ 
(which appears as  the matrix $A^\chi_m-(q-1)\sum_{k=1}^{[m/2]}A^\chi_{m-2k}$ in \cite[p.229]{Ihara}).
To the best of our knowledge, the matrix $\M_m$ itself has not been sufficiently analyzed or discussed with respect to its explicit expressions, inequalities, asymptotic behaviors, distributions, etc.

In this paper, for regular graphs, we analyze the matrix $\M_m$ and its variant
(written as $\a_m$). 
More precisely, we define a new matrix $\a_m$ by using $\M_m$ for a regular graph $G$. 
%More precisely, we define a new matrix $\a_m$ by using $\M_m$ for a regular graph $G$, 
%and we prove a limit theorem for $\a_m$.  
From the point of view of graph spectrum, we call $\a_m$ the {\it principal part} of  $\frac{1}{2q^{m/2}}\{\M_m-e_m(q-1)\I_n\}$ 
with respect to the adjacency matrix $\A(G)$ (see \S 4 below). 
To investigate $\M_m$ as a generalization of $N_m$ which plays a crucial role in spectral graph theory, 
we focus on the asymptotic behavior of the principal part $\a_m$ of $\frac{1}{2q^{m/2}}\{\M_m-e_m(q-1)\I_n\}$. 
In order to study the ``distribution" of $\a_m$, we consider the limit theorem of its ``$k$th moments." 
We also obtain the asymptotic behavior of the matrices $\s_m$ related to $\a_m$. 
As an application,  
we give an asymptotic formula of the average of $a(p^m)$, where $a(p^m)$ is 
the $p^m$th Fourier coefficient of the weight $2$ cusp form related to 
LPS Ramanujan graph $X^{p,q}$ (cf.\ \cite{LPS}).
This cusp form is the cuspidal part of 
the theta series which comes from the Hamilton quaternion algebra. 
It is very interesting that the $p^m$th Fourier coefficients of this theta series can be written by using the numbers of non-backtracking closed paths on $X^{p,q}$ (see Remark \ref{LPS} below).

Before ending the introduction, we state several remarks on statistical studies on graphs.
For a general regular graph, 
the relation between 
geometric objects such as the numbers of closed paths and 
eigenvalues of the adjacency matrix are well investigated, 
in spectral graph theory related to probability theory and number theory.
In 1987, Ahumada \cite{Ahumada} showed the Selberg trace formula
(STF for short) for regular graphs (see Theorem \ref{STF} below), 
and since then many researchers discuss the STF (cf.\ \cite{VN,Terras1999, Mnev}). 
Due to results of harmonic analysis on regular trees, 
the STF of a regular graph $G$ gives a beautiful relation between the eigenvalues of the adjacency matrix $\A(G)$ and the numbers $\{N_m\}_{m \ge 1}$. 
In 1981, McKay \cite{McKay} determined the limiting probability density for the eigenvalues of 
a series of regular graphs (the so-called Kesten-McKay law).  
In \cite{LPS}, Lubotzky, Phillips and Sarnak proved the Kesten-McKay law by using some kind of 
the STF for the sequence of their Ramanujan graphs.
We remark that Ramanujan graphs were explicitly constructed in \cite{Chiu} and \cite{JSY} by quaternion algebras other than the Hamilton quaternion algebra in the same method of \cite{LPS}.

This paper is organized as follows. 
In Section 2, we introduce the terminology of graph theory which are used in Sections $3$ and $4$. 
By explaining the Ihara zeta function $Z_{G}(u)$ of a graph $G$ and the Selberg trace formula of regular graphs, 
we present short reviews for non-backtracking closed paths in $G$ and for the numbers $N_m$. 
In Section 3, we restrict a graph $G$ to a regular graph, 
and we discuss several properties of the matrices $\M_m$. 
In Section 4, we introduce the matrices $\a_m$ and $\s_m$, and present limit theorems for these matrices. 
Furthermore, we give the asymptotics of 
the averages of $\frac{N_m}{q^{m/2}}$ for Ramanujan graphs.  
Finally, we obtain the asymptotics of the averages of 
the $p^m$th Fourier coefficients of the cusp forms related to  
the LPS Ramanujan graphs.

%%%%%%%%%%%%%%%%%%%%%%%%%%
\section{Preliminaries}
%%%%%%%%%%%%%%%%%%%%%%%%%%
Let $\Z_{\ge a}$ for $a \in \Z$ denote the set of all integers $m$ such that $m \ge a$.
For $x \in \mathbb{R}$, let $[x]$ denote the maximal integer not greater than $x$.

We denote by $\I_n$ and $\O_n$ the $n\times n$ unit matrix and the zero matrix, respectively.
In this paper, we assume that graphs and digraphs are finite. 
Let $G$ be a connected graph and let $E=E(G)$ and $D(G)$ be 
the edge set of $G$ and  the symmetric digraph 
corresponding to $G$, respectively. Here, $D(G)$ is defined as 
$D(G)= \{ (u,v),(v,u) \mid uv \in E\} $. We also identify  $D(G)$ with a graph $G$. For an arc $e=(u,v) \in D(G)$, set its origin  $o(e):=u$ and 
terminus $t(e):=v$, respectively.
Furthermore, 
we denote  the {\em inverse} of $e=(u,v)$ by $e^{-1}=(v,u)$. 

A sequence $P=(e_1, \ldots, e_n )$ of $n$ arcs such that $e_i \in D(G)$,
$t(e_i)=o( e_{i+1} )$  $(1 \leq i \leq n-1)$ is called a {\em path of length $n$} in $D(G)$ (or $G$). 
For a path $P$, we set $|P|:=n$, $o(P):=o(e_1)$ and $t(P):=t(e_n)$. 
We say that a path $P=(e_1, \ldots, e_n )$ has a {\em backtracking} 
if $ e^{-1}_{i+1} =e_i $ for some $i$ $(1 \leq i \leq n-1)$. 
A path $P=(e_1, \ldots, e_n )$ is called a {\it cycle} (or {\it closed path}) if $o(P)=t(P)$. 
The {\em inverse cycle} of a cycle 
$C=( e_1, \ldots, e_n )$ is the cycle 
$C^{-1} =( e^{-1}_n, \ldots, e^{-1}_1 )$. 
For a cycle $C=(e_1, \ldots, e_m )$, let $[C]$ be 
the set of the cyclic arrangements of $C$: 
\[
[C]:=\{
(e_1,\ldots, e_m), \ \  (e_2,\ldots, e_m, e_1),\ \ \dots, \ \  (e_m, e_1,\ldots, e_{m-1})
\}.
\]
For two cycles $C_1$ and $C_2$ in $G$,  $C_1$ is {\it equivalent} to $C_2$ if $[C_1]=[C_2]$. 
%%If $|C|\ge 3$ then the inverse cycle of $C$ is not equivalent to $C$. (when reduced)
Let $B^r$ be the cycle obtained by going $r$ times around a cycle $B$. 
Such a cycle is called a {\em multiple} of $B$. 
A cycle $C$ is called {\em reduced} if 
both $C$ and $C^2 $ have no backtracking,
and a cycle $C=( e_1, \ldots, e_n )$ is said to have a {\it tail} if $e_n=e_1^{-1}$.
Note that a cycle $C$ is reduced if and only if  
$C$ has no backtracking nor tails. 
A cycle $C$ is called {\em prime} if $C\neq B^r$ holds for all 
cycles $B$ with $|B|<|C|$ and integers $r \ge 2$.  
Note that each equivalence class of prime reduced cycles of a graph $G$ 
corresponds to a unique conjugacy class of 
the fundamental group $ \pi {}_1 (G,v)$ of $G$ for a fixed vertex $v$ of $G$. 
Then the Ihara zeta function $Z_G(u)$ of a graph $G$ is 
defined as a function
\[
Z_G (u):= \prod_{[C]} \  (1- u^{ \mid C \mid } )^{-1}
\] 
in $u \in \C$ such that $|u|$ is sufficiently small,
where $[C]$ runs over all equivalence classes of prime reduced cycles 
of $G$ (cf.\ \cite{Ihara,Sunada,Bass,Hashimoto,Stark-Terras96}). 
For a graph $G$ and for $m\in \mathbb{Z}_{\ge 1}$, set
\begin{align}\label{Nm}
N_m
&:=\#\{ C \mid  \textrm{$C$ is a reduced cycle of length $m$ in $G$}\}
\\[3pt]
&=\#\left\{ C \left| 
\begin{array}{l}
\textrm{$C$ is a cycle of length $m$}\\[3pt]
\textrm{without backtracking nor tails in $G$}
\end{array}
\right.
\right\}.
\notag
\end{align}
Note that $Z_G(u)$ is the generating function of $N_m$
as in  \eqref{zeta-exp} \cite[p.137, (2.1)]{Stark-Terras96}.

Let $G$ be a connected graph with $n$ vertices $v_1, \ldots, v_n $, 
and $n \in {\Z_{\ge 1}} $. 
The {\em adjacency matrix} $\A= \A (G)=(a_{ij} )$ is 
the $n\times n$ matrix such that 
\[
a_{ij}
=\left\{
\begin{array}{ll}
\textrm{the number of undirected edges connecting $v_i$ to $v_j$,} 
& \textrm{if $i\neq j$,}
\\[3pt]
2\times \textrm{the number of loops at $v_i$,}
& \textrm{if $i=j$.}
\end{array}
\right.
\]
We write $\D =( d_{ij} )$ for the diagonal matrix 
with $d_{ii} = \deg v_i$ and $d_{ij} =0$ $(i \neq j)$.
Let ${\rm Spec}(\A)$ be the multiset of all eigenvalues of $\A$.

The numbers $\{N_m\}_{m\ge 1}$ are related to 
$\A$ and $\D$ by the following determinant expression for the Ihara zeta function \cite{Ihara, Bass}:
\begin{theorem}[Ihara-Bass]\label{Ihara-Bass} 
Let $G$ be a connected graph with $n$ vertices $v_1 , \ldots , v_n$ and $m$ edges. 
Then the reciprocal of the Ihara zeta function of $G$ is given by 
\[
Z_G(u)^{-1} =(1- u^2 )^{r-1} 
\det ( \I_n -u \A+ u^2 ( \D - \I_n )),  
\]
where $r=m-n+1$ is the first Betti number of $G$.

In particular, if $G$ is a connected $(q+1)$-regular graph with $n$ vertices then 
\begin{align}\label{Ihara-formula}
Z_G(u)^{-1} 
&= (1- u^2 ) {}^{m-n} \det ( \I_n -u \A+q u^2 \I_n ) 
\\
&= (1- u^2 ) {}^{(q-1)n/2}\prod_{\lambda \in {\rm Spec}(\A)} (1- \lambda u+q u^2 ).
\notag  
\end{align}
\end{theorem}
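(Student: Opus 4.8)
The plan is to route through the non-backtracking \emph{edge matrix} of Hashimoto and then collapse its $2m \times 2m$ determinant onto the $n \times n$ one by the block-matrix manoeuvre of Bass.

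First I would introduce the edge matrix $\B$: the $2m \times 2m$ matrix indexed by the arcs of $D(G)$ whose $(e,f)$-entry is $1$ when $t(e)=o(f)$ and $f \neq e^{-1}$, and $0$ otherwise. Its combinatorial content is that a reduced (non-backtracking, tailless) cycle of length $k$ is precisely an arc sequence $(e_1,\dots,e_k)$ that fails to backtrack at every step \emph{including the cyclic seam} $e_k \to e_1$; the tailless condition $e_k \neq e_1^{-1}$ is exactly what the seam encodes. Hence $\Tr(\B^k)=N_k$ for all $k \ge 1$. Plugging this into \eqref{zeta-exp} and using the standard matrix identity $\sum_{k \ge 1}\frac{\Tr(\B^k)}{k}u^k = -\log\det(\I_{2m}-u\B)$ gives $Z_G(u)^{-1}=\det(\I_{2m}-u\B)$.

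Next I would record the incidence identities. Let $\mathbf{S}$ and $\T$ be the $n \times 2m$ start and terminus matrices ($\mathbf{S}_{v,e}=1$ iff $o(e)=v$, and $\T_{v,e}=1$ iff $t(e)=v$), and let $\J$ be the $2m \times 2m$ arc-reversal involution $e \mapsto e^{-1}$. A direct check gives $\B=\T^{\top}\mathbf{S}-\J$, $\mathbf{S}\T^{\top}=\A$, $\mathbf{S}\J=\T$, $\T\T^{\top}=\D$, and $\J^2=\I_{2m}$. Since $\J$ is a fixed-point-free involution it has $+1$ and $-1$ as eigenvalues with multiplicity $m$ each, so $\det(\I_{2m}+u\J)=(1-u^2)^m$ and $(\I_{2m}+u\J)^{-1}=(1-u^2)^{-1}(\I_{2m}-u\J)$. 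The crux is the Bass identity, obtained by evaluating
\[
\det\begin{pmatrix} \I_n & u\mathbf{S} \\ \T^{\top} & \I_{2m}+u\J \end{pmatrix}
\]
through its two Schur complements. Reducing along the $(1,1)$-block $\I_n$ yields $\det(\I_{2m}+u\J-u\T^{\top}\mathbf{S})=\det(\I_{2m}-u\B)$, whereas reducing along the $(2,2)$-block gives $(1-u^2)^m\det\bigl(\I_n-\tfrac{u}{1-u^2}\,\mathbf{S}(\I_{2m}-u\J)\T^{\top}\bigr)$. Using $\mathbf{S}\J=\T$ and $\T\T^{\top}=\D$ one computes $\mathbf{S}(\I_{2m}-u\J)\T^{\top}=\A-u\D$, and pulling the scalar $(1-u^2)$ out of the $n \times n$ determinant turns this into $(1-u^2)^{m-n}\det(\I_n-u\A+u^2(\D-\I_n))$. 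Equating the two evaluations and substituting $m-n=r-1$ yields the first formula.

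For the regular case I would put $\D=(q+1)\I_n$, so $\D-\I_n=q\I_n$; the handshake relation $2m=(q+1)n$ gives $m-n=(q-1)n/2$, and diagonalizing $\A$ replaces the $n \times n$ determinant by $\prod_{\lambda \in \Spec(\A)}(1-\lambda u+q u^2)$, which is \eqref{Ihara-formula}. I expect the main obstacle to be the block-determinant step: one must hit on the correct augmented matrix so that the reversal term $\J$ is exactly what manufactures the powers of $(1-u^2)$, and then carry out the two Schur-complement evaluations and the identity $\mathbf{S}(\I_{2m}-u\J)\T^{\top}=\A-u\D$ without transpose or sign slips. The secondary delicate point is the combinatorial bookkeeping behind $\Tr(\B^k)=N_k$, where it is the cyclic seam that enforces the ``no tail'' requirement.
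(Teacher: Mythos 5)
Your argument is correct, but note that the paper itself contains no proof of Theorem \ref{Ihara-Bass}: it is quoted as a known result of Ihara and Bass (with the remark that Stark and Terras gave an elementary proof), so there is no internal proof to compare against. What you have written is the classical Bass proof in its streamlined block-matrix (Kotani--Sunada) form, and the details check out: the cyclic seam condition $e_1\neq e_k^{-1}$ in $\Tr(\B^k)$ is precisely taillessness, so $\Tr(\B^k)=N_k$ and \eqref{zeta-exp} plus $\sum_{k\ge1}\Tr(\B^k)u^k/k=-\log\det(\I_{2m}-u\B)$ give $Z_G(u)^{-1}=\det(\I_{2m}-u\B)$; the incidence identities $\B=\T^{\top}\mathbf{S}-\J$, $\mathbf{S}\T^{\top}=\A$, $\mathbf{S}\J=\T$, $\T\T^{\top}=\D$ all hold (including at loops, thanks to the paper's convention that a loop contributes $2$ to the diagonal of $\A$); and the two Schur complements of your augmented matrix give $\det(\I_{2m}-u\B)$ on one side and $(1-u^2)^m\det\bigl(\I_n-\tfrac{u}{1-u^2}(\A-u\D)\bigr)=(1-u^2)^{m-n}\det(\I_n-u\A+u^2(\D-\I_n))$ on the other, since $(1-u^2)\I_n-u(\A-u\D)=\I_n-u\A+u^2(\D-\I_n)$. (One pedantic point you could make explicit: the $(2,2)$-Schur reduction needs $\I_{2m}+u\J$ invertible, i.e.\ $u\neq\pm1$, which suffices because both sides are polynomials in $u$.) It is worth contrasting your route with the one the paper gestures at: Ihara (for regular graphs) and Stark--Terras work on the vertex side with the matrices $\A_m$ and $\M_m$ --- essentially the machinery of Section 3 here --- using the recurrence $\A_m=\A_{m-1}\A-q\A_{m-2}$ and $N_m=\Tr(\M_m)$ to sum the generating function $\sum_m N_m u^m/m$ directly; that proof is more elementary and meshes with the paper's later computations (indeed \eqref{tr:M_m-T_m} together with \eqref{zeta-exp} recovers the regular-case formula \eqref{Ihara-formula}), but in the irregular case it demands heavier combinatorial bookkeeping, whereas your edge-matrix argument delivers the general three-term determinant formula in a single determinant manipulation.
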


In \cite[Theorem 1.5]{Anantharaman}, Anantharaman obtained a variant of the Ihara-Bass formula for irregular graphs, which has the advantage of involving the characteristic polynomial of $\A$. In \cite[Lemma 2.1]{AnantharamanSabri},
%\cite[Lemma 3.1]{Anantharaman},
Anantharaman and Sabri discussed certain weights of non-backtracking paths by the Green functions on the universal covering trees
of irregular graphs.
The weights are a generalization of $\M_m$.

Now, let $G$ be a connected $(q+1)$-regular graph. 
Let $h(\theta)$ be an analytic function $\mathbb{\R} \to \mathbb{\C}$
with the following three conditions:
(1) $h( \theta + 2 \pi )=h( \theta )$, (2) $h(- \theta )=h( \theta )$, 
(3) $\sum_{m=1}^{\infty} q^{m/2}|\widehat{h}(m)|<\infty$ (the Ahumada convergence condition).
Here $\widehat{h}$ is the {\em Fourier transform} of $h$ defined by 
\[
\widehat{h} (m)= \frac{1}{ 2 \pi } \int^{ 2 \pi }_0 h( \theta )\, 
e^{ -\sqrt{-1} m \theta }\, d \theta,  \qquad (m \in \mathbb{Z}).
\]
By the condition (3), $h(\theta)=\sum_{m\in\Z}\widehat{h}(m)e^{\sqrt{-1}m\theta}$ is regarded as a function on $-\log\sqrt{q}\le \Im\,\theta \le \log \sqrt{q}$.

The numbers $\{N_m\}_{m\geq 1}$ are related to the eigenvalues of $\A$ by the 
Selberg trace formula of a regular graph \cite{Ahumada,VN}: 
\begin{theorem}[Ahumada]\label{STF}
Let $G$ be a connected $(q+1)$-regular graph with $n$ vertices.
For $\lambda \in \Spec(\A)$,  
%with $|\lambda| <2\sqrt{q}$,  
we set 
\[
\theta_\lambda=\arccos \left(\frac{\lambda}{2\sqrt{q}}\right) \in \C, 
\]
where $0\le\Re\,\theta_\lambda\le \pi$ and $-\log\sqrt{q}\le \Im\,\theta_\lambda\le \log \sqrt{q}$.  Then we have the trace formula
\begin{align*}
&\sum_{ \lambda \in  \Spec(\A) } h( \theta_\lambda)
\\[3pt]
&= \frac{2nq(q+1)}{ \pi } 
\int^{ \pi }_0 \frac{ \sin^2 \theta }{(q+1 )^2 -4q \cos^2 \theta } \, 
h( \theta )\, d \theta 
+ \sum_{m=1}^{ \infty } N_m q^{-m/2}\,\widehat{h}(m), 
\notag 
\end{align*}
where $N_m$ is the number of reduced cycles of length $m$ in $G$. 
\end{theorem}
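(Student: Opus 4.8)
The plan is to deduce the trace formula from the Ihara--Bass determinant formula \eqref{Ihara-formula} by extracting, for each $m$, the power sum $\sum_{\lambda}\cos(m\theta_\lambda)$ of the eigenangles and matching it against $N_m$. First I would take the logarithmic derivative $u\frac{d}{du}\log Z_G(u)=\sum_{m\ge1}N_m u^m$ of the generating-function expression \eqref{zeta-exp}, and compare it with the logarithmic derivative of the right-hand side of \eqref{Ihara-formula}. Writing $\lambda=2\sqrt{q}\cos\theta_\lambda$ gives the factorization $1-\lambda u+qu^2=(1-\sqrt{q}\,e^{\sqrt{-1}\theta_\lambda}u)(1-\sqrt{q}\,e^{-\sqrt{-1}\theta_\lambda}u)$, so that $u\frac{d}{du}\log(1-\lambda u+qu^2)=-\sum_{m\ge1}2q^{m/2}\cos(m\theta_\lambda)\,u^m$, while the factor $(1-u^2)^{(q-1)n/2}$ contributes only to the even powers. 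Setting $e_m=1$ for $m$ even and $e_m=0$ for $m$ odd and comparing coefficients of $u^m$, I expect the clean identity
\begin{align*}
N_m=e_m(q-1)n+2q^{m/2}\sum_{\lambda\in\Spec(\A)}\cos(m\theta_\lambda),
\end{align*}
equivalently $\sum_\lambda\cos(m\theta_\lambda)=\bigl(N_m-e_m(q-1)n\bigr)\big/(2q^{m/2})$.

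Next I would invoke conditions (1)--(2) on $h$ to write its absolutely convergent Fourier expansion $h(\theta)=\widehat{h}(0)+2\sum_{m\ge1}\widehat{h}(m)\cos(m\theta)$, so that, since $\A$ has exactly $n$ eigenvalues counted with multiplicity,
\begin{align*}
\sum_{\lambda\in\Spec(\A)}h(\theta_\lambda)=n\,\widehat{h}(0)+2\sum_{m\ge1}\widehat{h}(m)\sum_{\lambda}\cos(m\theta_\lambda).
\end{align*}
Substituting the identity from the first step and isolating the terms carrying $N_m$ immediately produces the sum $\sum_{m\ge1}N_m q^{-m/2}\widehat{h}(m)$; here the Ahumada condition (3) is precisely what guarantees absolute convergence, since $N_m$ grows like $q^m$ (as one sees from the trivial eigenvalue $\lambda=q+1$, for which $\theta_\lambda$ is purely imaginary and $2q^{m/2}\cos(m\theta_\lambda)=q^m+1$). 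What remains is the ``identity part'' $n\,\widehat{h}(0)-(q-1)n\sum_{k\ge1}q^{-k}\widehat{h}(2k)$ coming from $e_m$, and the task reduces to showing this equals the integral on the right-hand side.

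Finally I would verify that
\begin{align*}
\frac{2nq(q+1)}{\pi}\int_0^\pi\frac{\sin^2\theta}{(q+1)^2-4q\cos^2\theta}\,h(\theta)\,d\theta
=n\,\widehat{h}(0)-(q-1)n\sum_{k\ge1}q^{-k}\widehat{h}(2k)
\end{align*}
by expanding the weight in a Fourier series and integrating term by term. The substitution $\phi=2\theta$ turns the denominator into $q^2+1-2q\cos\phi$ and the numerator into $(1-\cos\phi)/2$, so the Poisson-kernel expansion $\tfrac{1-r^2}{1-2r\cos\phi+r^2}=1+2\sum_{j\ge1}r^j\cos(j\phi)$ with $r=1/q$ yields the Fourier coefficients of the weight explicitly. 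I expect the constant term to produce exactly the normalization giving $n\,\widehat{h}(0)$, the odd cosines to integrate to zero (so only even $m=2k$ survive, consistent with $e_m$), and the coefficient of $\cos(2k\theta)$ to come out proportional to $-(q-1)q^{-k}$, matching the identity part once the prefactor $\tfrac{2nq(q+1)}{\pi}$ is applied. The main obstacle is this last computation: pinning down the Kesten--McKay/Plancherel weight's Fourier coefficients and the overall normalization exactly, together with a careful justification of the term-by-term integration and of the interchange of summation and integration, for which condition (3) is once more the decisive input.
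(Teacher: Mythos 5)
Your proposal is correct, but note that the paper contains no proof of Theorem \ref{STF} to compare against: the statement is quoted from Ahumada \cite{Ahumada} (see also \cite{VN}), so what you have written is an independent, self-contained derivation rather than a variant of an in-paper argument. Your route is the standard elementary one, and every step you flagged does close. The identity you extract by logarithmic differentiation of \eqref{zeta-exp} against \eqref{Ihara-formula}, namely $N_m = e_m(q-1)n + 2q^{m/2}\sum_{\lambda\in\Spec(\A)}\cos(m\theta_\lambda)$, is literally the paper's \eqref{tr:M_m-T_m}, which the paper obtains instead by taking traces in Proposition \ref{thm:Mm-Tm} (a Chebyshev manipulation of $\M_m$); you could cite \eqref{tr:M_m-T_m} and skip your first step entirely. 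Your convergence bookkeeping is also right: for exceptional $\lambda$ one has $|\cos m\theta_\lambda|\le\cosh(m\log\sqrt{q})=\tfrac{1}{2}(q^{m/2}+q^{-m/2})$, so $N_m=O(nq^m)$, and condition (3) makes all rearrangements absolutely convergent, including the evaluation of the Fourier series of $h$ at the complex angles with $|\Im\,\theta_\lambda|\le\log\sqrt{q}$ (exactly why the paper records that $h$ extends to that strip). The computation you left as the ``main obstacle'' comes out precisely as you predicted: with $\phi=2\theta$ one has $(q+1)^2-4q\cos^2\theta=q^2+1-2q\cos\phi$, and the Poisson kernel with $r=q^{-1}$ gives
\begin{align*}
\frac{2q(q+1)}{\pi}\,\frac{\sin^{2}\theta}{(q+1)^{2}-4q\cos^{2}\theta}
=\frac{1}{\pi}\Bigl(1-(q-1)\sum_{k\ge1}q^{-k}\cos(2k\theta)\Bigr),
\end{align*}
so that, using $\widehat{h}(m)=\frac{1}{\pi}\int_0^{\pi}h(\theta)\cos(m\theta)\,d\theta$ for even $h$ and term-by-term integration (justified by the geometric decay $q^{-k}$ together with condition (3)), the integral term equals $n\,\widehat{h}(0)-(q-1)n\sum_{k\ge1}q^{-k}\widehat{h}(2k)$, which is exactly your ``identity part.'' In short: the proposal is a complete and correct proof once the routine Fubini justifications are written out, and it buys the paper something it does not have, namely a proof of the quoted trace formula from ingredients (Ihara's formula and \eqref{tr:M_m-T_m}) already present in the text.
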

For a general theory of the Selberg trace formula, 
refer to \cite{Terras1999, VN}. 

%%%%%%%%%%%%%%%%%%%%%%%%%%%%%%%%%%%%%%%%%%%%%%%%%%%%%%%%%%%%%%%%%%%%%%%%%%%%%%%%
\section{The properties for the matrices on non-backtracking paths of graphs}
%%%%%%%%%%%%%%%%%%%%%%%%%%%%%%%%%%%%%%%%%%%%%%%%%%%%%%%%%%%%%%%%%%%%%%%%%%%%%%%%

Let $G$ be a finite connected $(q+1)$-regular graph with $n$ vertices $ v_1, \ldots, v_n $.  
For an integer $m\ge 1$, we consider an $n \times n$ matrix 
$\A_m$ such that the $ij$-entry of $\A_m$ is the number of 
%reduced
%non-backtracking $(v_i, v_j )$-paths of length $m$ in $G$.
non-backtracking paths from $v_i$ to $v_j$ of length $m$ in $G$ (see \cite[(2.3)]{Stark-Terras96}).
The matrix $\A_m$ was discussed in \cite{Ihara, LPS, Sarnak, Stark-Terras96, Serre97, DSV}. 
Put $\A = \A (G)$.
Then it holds that 
\[
\A_1 = \A, \quad \A_2 = \A^2 - (q+1)\I_n,  
\]
and $\A_m$ satisfies the following recurrence relation: 
\[
\A_m = \A_{m-1}\, \A - q\A_{m-2} \quad  (m \geq 3). 
\]
For $m=0$, we set $\A_0 = \I_n$. 
Relating to the matrix $\A_m$, we define an $n\times n$ matrix $\M_m$
for $m\in \mathbb{Z}_{\ge 1}$ as
\begin{align}\label{def:M_m}
\M_m := \A_m -(q-1)\sum^{[ \frac{m-1}{2} ]}_{k=1} \A_{m-2k}.
\end{align}
By the same arguments as \cite[\S 2]{Stark-Terras96}, 
the $ii$-entry of $\M_m$ is the number of 
reduced $v_i$-cycles of length $m$ in $G$.  
Thus, the trace $N_m =\mathrm{Tr}(\M_m)$
of $\M_m$ is the number of reduced cycles of length $m$ in $G$
(\cite[(2.7)]{Stark-Terras96}). 

For $m \in \mathbb{Z}_{\ge0}$, let $T_m$ be  the {\em Chebyshev polynomial of the first kind} defined by 
\[
T_m ( \cos \theta )= \cos m \theta. 
\]
Then the matrix $\M_m $ is written in terms of $T_m$.

\begin{prop}\label{thm:Mm-Tm}
Let $G$ be a connected $(q+1)$-regular graph with $n$ vertices.  
Then 
\begin{align}\label{M_m-T_m}
\M_m = 2 q^{m/2} T_m \left( \frac{ \A}{2 \sqrt{q}} \right)+ e_m (q-1) \I_n 
\end{align}
for any $m \in \Z_{\ge 1}$, where $e_m$ is defined by 
\[
e_m :=\left\{
\begin{array}{ll}
1, & \mbox{if $m$ is even, } \\
0, & \mbox{if $m$ is odd.}
\end{array}
\right.
\]
\end{prop}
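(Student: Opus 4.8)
The plan is to pass to generating functions in a formal variable $u$, working in the ring of formal power series whose coefficients are polynomials in $\A$. Since all such coefficients commute, $\A$ may be treated as a scalar, and dividing by a series with invertible constant term $\I$ is legitimate. As both sides of \eqref{M_m-T_m} are polynomials in $\A$, it suffices to prove that the two generating functions $\sum_{m\ge1}\M_m u^m$ and $\sum_{m\ge1}\{2q^{m/2}T_m(\A/(2\sqrt q))+e_m(q-1)\I\}u^m$ agree, and then to compare coefficients of $u^m$.

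First I would compute $F(u):=\sum_{m\ge0}\A_m u^m$. Multiplying the recurrence $\A_m=\A_{m-1}\A-q\A_{m-2}$, which is valid only for $m\ge3$, by $u^m$ and summing over $m\ge3$, then inserting the initial data $\A_0=\I$, $\A_1=\A$ and $\A_2=\A^2-(q+1)\I$, the shifted series recombine and all cross terms cancel, leaving $F(u)(1-\A u+qu^2)=1-u^2$, hence
\[
F(u)=\frac{1-u^2}{1-\A u+qu^2}.
\]
I expect the only delicate point here to be that the recurrence fails at $m=2$: it is precisely the discrepancy between $\A_2$ and $\A^2-q\I$ that produces the numerator $1-u^2$ instead of $1$, so the low-order terms must be tracked with care.

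Next I would record the two comparison series. From the classical identity $\sum_{m\ge0}T_m(x)t^m=(1-tx)/(1-2tx+t^2)$, substituting $x=\A/(2\sqrt q)$ and $t=\sqrt q\,u$ gives $\sum_{m\ge0}2q^{m/2}T_m(\A/(2\sqrt q))u^m=(2-\A u)/(1-\A u+qu^2)$, while the even-index geometric series gives $\sum_{m\ge1}e_m(q-1)u^m=(q-1)u^2/(1-u^2)$. To compute $\sum_{m\ge1}\M_m u^m$ from \eqref{def:M_m}, the first summand contributes $F(u)-\I$, and for the double sum $\sum_{m\ge1}\sum_{k=1}^{[(m-1)/2]}\A_{m-2k}u^m$ I would reindex by $j=m-2k$ and $i=k$. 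As $(m,k)$ ranges over the index set, $j\ge1$ runs over all integers of the same parity as $m$ with $1\le j\le m-2$, so the double sum factors as $\big(\sum_{j\ge1}\A_j u^j\big)\sum_{i\ge1}u^{2i}=(F(u)-\I)\,u^2/(1-u^2)$. Getting these parities and ranges exactly right is the main combinatorial obstacle; everything afterwards is mechanical.

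Combining these, $\sum_{m\ge1}\M_m u^m=(F(u)-\I)\big(1-(q-1)u^2/(1-u^2)\big)=(F(u)-\I)(1-qu^2)/(1-u^2)$, and inserting $F(u)-\I=(\A u-(q+1)u^2)/(1-\A u+qu^2)$ reduces the theorem to the single rational identity
\[
\frac{(\A u-(q+1)u^2)(1-qu^2)}{(1-\A u+qu^2)(1-u^2)}=\frac{\A u-2qu^2}{1-\A u+qu^2}+\frac{(q-1)u^2}{1-u^2}.
\]
Clearing the common denominator $(1-\A u+qu^2)(1-u^2)$ turns this into a polynomial identity in $u$ with $\A$ as a parameter, checked by expanding and matching the coefficients of $u$, $u^2$, $\A u^3$ and $u^4$; comparing coefficients of $u^m$ then yields \eqref{M_m-T_m}. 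As an alternative route, one could first prove the closed form $\A_m=2q^{m/2}T_m(\A/(2\sqrt q))+(q-1)q^{(m-2)/2}U_{m-2}(\A/(2\sqrt q))$, with $U$ the Chebyshev polynomial of the second kind, deduce $\M_m=\M_{m-2}+\A_m-q\A_{m-2}$ from $S_m=\A_{m-2}+S_{m-2}$ (where $S_m$ denotes the subtracted sum), and finish by induction using $\A_m-q\A_{m-2}=2q^{m/2}T_m-2q^{(m-2)/2}T_{m-2}$ together with $U_n-U_{n-2}=2T_n$.
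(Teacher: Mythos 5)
Your proposal is correct, but it takes a genuinely different route from the paper's. The paper proves \eqref{M_m-T_m} by first writing the normalized matrices $\A_m' = q^{-m/2}\A_m$ in Chebyshev form, $\A_m' = U_m(\A'/2) - \tfrac{1}{q}U_{m-2}(\A'/2)$, substituting this into the defining sum \eqref{def:M_m}, and telescoping the finite sum directly; the boundary term of the telescope, evaluated via $U_0(x)=1$ and $U_{-1}(x)=0$, is exactly where $e_m(q-1)\I_n$ comes from, and the identity $2\,T_m(x)=U_m(x)-U_{m-2}(x)$ finishes the argument. You instead work in the commutative ring $\R[\A][[u]]$ and reduce everything to a single rational identity, and your three ingredients all check out: the relation $F(u)(\I_n-\A u+qu^2)=(1-u^2)\I_n$, where the failure of the recurrence at $m=2$ (the discrepancy $\A_2=(\A^2-q\I_n)-\I_n$) indeed produces the numerator $1-u^2$; the reindexing $j=m-2k$ of the double sum, where the parity constraint automatically forces $j\ge 2$ when $m$ is even, so $\A_0$ never enters, consistent with the upper limit $[\tfrac{m-1}{2}]$ in \eqref{def:M_m}; and the final rational identity, both sides of which expand to $\A u-(q+1)u^2-q\A u^3+q(q+1)u^4$ after clearing the denominator $(1-\A u+qu^2)(1-u^2)$. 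What your route buys is one global verification in place of term-by-term bookkeeping, plus the by-product $\sum_{m\ge1}\M_m u^m=(F(u)-\I_n)(1-qu^2)/(1-u^2)$, a generating-function identity in the same spirit as the zeta-function manipulations of Proposition \ref{generating}; what the paper's route buys is the reusable identity for $\A_m'$, which also underlies Remark \ref{LPS} and the matrices $\s_m$ later in the paper. Your sketched second alternative is in fact much closer to the paper's method: the closed form $\A_m=2q^{m/2}T_m(\A/(2\sqrt q))+(q-1)q^{(m-2)/2}U_{m-2}(\A/(2\sqrt q))$ is the paper's \eqref{eq:A'-U} rewritten via $U_m=2T_m+U_{m-2}$, and the induction step works since $\A_m-q\A_{m-2}=2q^{m/2}T_m(\A/(2\sqrt q))-2q^{(m-2)/2}T_{m-2}(\A/(2\sqrt q))$ follows from $q^{(m-2)/2}(q-1)-q^{m/2}=-q^{(m-2)/2}$, with base cases $\M_1=\A$ and $\M_2=\A^2-(q+1)\I_n$ matching \eqref{M_m-T_m}.
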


\begin{proof}
For convenience, we set $\A_m':=\frac{1}{q^{m/2}}\A_m$ and 
$\A':=\A_1'=\frac{1}{q^{1/2}}\A$. 
Then $\A_0'=\I_n$, $\A_2'=(\A_1')^2-\frac{q+1}{q}\I_n$ and 
\[
\A_m'=\A_{m-1}'\, {\A}'-\A_{m-2}'\quad (m\ge 3)
\]
hold, and whence we obtain 
\begin{align}\label{eq:A'-U}
\A_m'
=
U_m\left(\dfrac{\A'}{2}\right)
-
\dfrac{1}{q}U_{m-2}\left(\dfrac{\A'}{2}\right)
\end{align} 
(cf.\ \cite[\S 2.3 and \S 8.2]{Serre97} and \cite[p.6, Remark]{HSSS}). Here $U_m(x)$ is the {\em Chebyshev polynomial of the second kind} defined by 
\[
U_m ( \cos\theta )= \dfrac{\sin (m+1) \theta}{\sin \theta}, \quad (m \in \Z_{\ge 0}),
\]
and we set $U_{-1}(x)=0$. Then, the family $\{U_m(x)\}_{m\ge -1}$ satisfies the recurrence relation $U_{m+2}(x)=2xU_{m+1}(x)-U_m(x)$ for $m \in \Z_{\ge -1}$. Now let us take any $m \in \Z_{\ge1}$.
By the definition \eqref{def:M_m} of $\M_m$, 
we obtain
\begin{align*}
\dfrac{\M_m}{q^{m/2}} = \A_m' 
-(q-1)\sum^{[ \frac{m-1}{2} ]}_{k=1} q^{-k}\A_{m-2k}'.
\end{align*} 
Hence, by \eqref{eq:A'-U}, $q^{-m/2}\M_m$ is expressed as
\begin{align}\label{eq:A'-U-1}
\dfrac{\M_m}{q^{m/2}}
=& 
U_m\left(\dfrac{\A'}{2}\right)
-
\dfrac{1}{q}U_{m-2}\left(\dfrac{\A'}{2}\right)
\\[3pt]
&-(q-1)\sum^{[ \frac{m-1}{2} ]}_{k=1} q^{-k}
\left\{
U_{m-2k}\left(\dfrac{\A'}{2}\right)
-
\dfrac{1}{q}U_{m-2(k+1)}\left(\dfrac{\A'}{2}\right)
\right\}.
\notag
\end{align}
The sum in the right-hand side above can be written as
\begin{align}
\label{eq:A'-U-2}
&(q-1)\sum^{[ \frac{m-1}{2} ]}_{k=1} q^{-k}
\left\{
U_{m-2k}\left(\dfrac{\A'}{2}\right)
-
\dfrac{1}{q}U_{m-2(k+1)}\left(\dfrac{\A'}{2}\right)
\right\}
\\[3pt]
&=
\left(
\sum_{k=0}^{[ \frac{m-1}{2} ]-1}
-\sum_{k=1}^{[ \frac{m-1}{2} ]}
\right)q^{-k}(1-q^{-1})U_{m-2(k+1)}\left(\dfrac{\A'}{2}\right)
\notag \\[3pt]
&=(1-q^{-1})U_{m-2}\left(\dfrac{\A'}{2}\right)
-q^{-[ \frac{m-1}{2} ]}(1-q^{-1})U_{m-2-2[ \frac{m-1}{2} ]}\left(\dfrac{\A'}{2}\right).
\notag 
\end{align}
The second term in the equation above is evaluated as 
\begin{align}\label{eq:A'-U-3}
&q^{-[ \frac{m-1}{2} ]}(1-q^{-1})U_{m-2-2[ \frac{m-1}{2} ]}\left(\dfrac{\A'}{2}\right)
\\[3pt]
&=\left\{
\begin{array}{ll}
q^{-m/2}(q-1)U_0(\A'/2),& \mbox{if $m$ is even, } \\
q^{-(m-1)/2}(1-q^{-1})U_{-1}(\A'/2), & \mbox{if $m$ is odd,}
\end{array}
\right.
\notag \\[3pt]
&=q^{-m/2}e_m(q-1)\I_n
\notag
\end{align}
with the aid of $U_0(x)=1$, $U_{-1}(x)=0$ and the definition of $e_m$.
Combining \eqref{eq:A'-U-1}, \eqref{eq:A'-U-2}, \eqref{eq:A'-U-3} and 
the identity $2\,T_m(x)=U_{m}(x)-U_{m-2}(x)$,
the assertion follows.
\end{proof}
Taking the traces of the both sides of \eqref{M_m-T_m}, 
it follows that
\begin{align}\label{tr:M_m-T_m}
N_m=  2 q^{m/2} \sum_{\lambda \in \Spec(\A)} T_m\left(\frac{\lambda}{2\sqrt{q}}\right)+ne_m(q-1).
\end{align}
This equation \eqref{tr:M_m-T_m} and  equations equivalent to \eqref{tr:M_m-T_m} have already obtained in many previous works (for example, see \cite[(16), p.229]{Ihara}, \cite[Lemme 3]{Serre97}, \cite[Lemma 4]{Rangarajan} and \cite[(8)]{Huang}\footnote{Huang's $T_k(x)$ in \cite[(8)]{Huang} is not the Chebyshev polynomial of the first kind, and his $T_k(x)$ is equal to our $2\,T_k(x/2)$ as in \cite[\S 3]{Huang}.}).

\begin{remark} \label{LPS}
Similarly to $\M_m$, the matrix $\T_m$ concerned with certain cycles
was considered in the context of Ramanujan graphs.
	For example, 
	let $G=X^{p,q}$ be the LPS Ramanujan graph defined in \cite{LPS}. 
	Here $p$ and $q$ are prime numbers such that $p\equiv q \equiv 1 \pmod{4}$ and $p\neq q$. 
	For such $p$ and $q$, $G$ is constructed as a $(p+1)$-regular Cayley graph, and the number of vertices $n$ is equal to $\#{\rm PGL}_2(\F_q)=q(q^2-1)$ and $\# {\rm PSL}_2(\F_q)=\frac{q(q^2-1)}{2}$ according to $(\frac{p}{q})=-1$ and $(\frac{p}{q})=1$, respectively. Here $(\frac{\cdot}{\cdot})$ is the Legendre symbol. Set 
	\[
	\T_m :=\sum_{0\le r \le m/2}\A_{m-2r}=p^{m/2} U_{m}\left(\dfrac{\A}{2\sqrt{p}}\right)
	\]
	{\rm (}cf.\ \cite[p.23]{DSV}{\rm )}.
	Then the trace of $\T_m$ is written as
	\begin{align*}
	\dfrac{2}{n}\Tr(\T_m)
	&=\dfrac{2}{n} p^{m/2} \sum_{\lambda \in {\rm Spec}(\A) } U_{m}\left(\dfrac{\lambda}{2\sqrt{p}}\right)
	=
	C(p^m)+a(p^m), 
	\end{align*}
	where $C(p^m)$ and $a(p^m)$ are the $p^m$th Fourier coefficients of an Eisenstein series and
	of a cusp form of weight $2$ on $\Gamma(16q^2)$, respectively.
	Indeed, the trace of $\frac{2}{n}\Tr(\T_m)$ is the $p^m$th Fourier coefficient of the theta function $\Theta(z)=\sum_{x \in \Z^4}e^{2\pi i Q(x)z}$
	associated with the quadratic form $Q(x_1, x_2, x_3, x_4)=x_1^2+4q^2x_2^2+4q^2x_3^2+4q^2x_4^2$ {\rm (}cf.\ \cite[p.272]{LPS}{\rm)}.
	Let $f_{m,v}$ be the number of non-backtracking cycles {\rm(}which may have tails{\rm)} from a vertex $v$ to $v$ of length $m$ in $G$.
Note that $f_{m,v}=(\A_m)_{v,v}$, that is, $f_{m,v}$ is the $vv$-entry of the matrix $\A_m$.  
 Since $G$ is vertex-transitive, $f_{m,v}$ is constant for all $v\in V(G)$. Set $f_m=f_{m,v}$. Then, we obtain
	\[
	\Tr(\T_m)=n\sum_{0\le r \le m/2} f_{m-2r}.
	\] 
	Thus it holds that 
	\[
	2\sum_{0\le r \le m/2} f_{m-2r} = C(p^m)+a(p^m).
	\]
	Here $C(p^m)$ is explicitly calculated as 
	$$C(p^m) = \frac{1+(\frac{p}{q})^m}{2}\frac{4}{q(q^2-1)}
	\frac{p^{m+1}-1}{p-1},$$
	by \cite[(4.19) and (4.20)]{LPS}, and
Deligne's bound	$a(p^m) = O_\epsilon(p^{m(1/2+\epsilon)})$ holds for any $\epsilon>0$
{\rm (}cf.\ \cite{Deligne1}, \cite{Deligne2}, \cite[Theorem 4.5.17]{Miyake}{\rm )}.
\end{remark} 

\section{The limit theorem with respect to the matrices on non-backtracking paths  
of a regular graph} 

Let $G$ be a connected $(q+1)$-regular graph with $n$ vertices and let 
$\A=\A(G)$ be its adjacency matrix. 
Furthermore, let $\sigma(\A)$ be the set of distinct 
eigenvalues of $\A$.  For $\lambda \in \sigma(\A)$, let $\P_\lambda$ be the projection into the eigenspace of $\lambda$. 
Then we have 
\[
\A= \sum_{\lambda \in \sigma(\A)}\lambda \P_\lambda. 
\]
Note that 
\[
\textrm{$\P^2_\lambda =\P_\lambda$  \ and \  $\P_\lambda \P_\mu = \O_n$ \quad ($\lambda, \mu \in \sigma(\A)$, $\lambda \neq \mu$). }
\]
Furthermore, we have 
\[
\P_{q+1} = \frac{1}{ \sqrt{n}}  
\left[
\begin{array}{c}
1 \\
\vdots \\
1 
\end{array}
\right]
\dfrac{1}{\sqrt{n}}  
\left[
\begin{array}{ccc}
1 & \cdots & 1 
\end{array}
\right]
= 
\frac{1}{n} 
\left[
\begin{array}{ccc}
1 & \cdots & 1 \\ 
\vdots & \ddots  & \vdots \\
1 & \cdots & 1 
\end{array}
\right] 
= \frac{1}{n} {\bf J}_n,  
\]
where ${\bf J}_n $ is the $n \times n$ matrix with all entries being one. 
Moreover, if $X$ is bipartite then $-(q+1)\in \sigma(\A)$ and we can take an eigenvector  
\[
\dfrac{1}{\sqrt{n}}{}^t[\underbrace{1,\dots,1}_{n/2},\underbrace{-1,\dots,-1}_{n/2}], 
\]
with respect to the eigenvalue $-(q+1)$ under a suitable labeling of vertices. In this case, we have
\[
\P_{-(q+1)}=\dfrac{1}{n}
\left[\begin{array}{cc}
 \J_{n/2} & -\J_{n/2} \\
 -\J_{n/2} & \J_{n/2}
 \end{array}\right].
\]
Note that $\Tr(\P_{q+1})=\Tr(\P_{-(q+1)})=1$.

Thus, we have 
\[
\A^m = \sum_{\lambda\in \sigma(\A)}\lambda^m \P_\lambda. 
\]
By Proposition \ref{thm:Mm-Tm}, we have 
\[
\M_m = 2 q^{m/2} \sum_{\lambda\in \sigma(\A)} T_m \left( \frac{ \lambda}{2 \sqrt{q}} \right) \P_\lambda + e_m (q-1) \I_n. 
\]

For $m\in \mathbb{Z}_{\ge 1}$, we define a new matrix $\a_m $ as follows: 
\begin{align}\label{a_m}
\a_m 
&:= 
\dfrac{1}{2 q^{m/2}}
\left\{ \M_m - 2 q^{m/2} \sum_{\lambda \in \sigma(\A) \atop |\lambda|\geq 2\sqrt{q}} T_m \left( \frac{ \lambda}{2 \sqrt{q}} \right) \P_\lambda - e_m (q-1) \I_n \right\} 
\\[6pt]
&= \sum_{\lambda \in \sigma(\A) \atop | \lambda |< 2 \sqrt{q}} T_m \left( \frac{\lambda }{2 \sqrt{q}} \right) \P_{\lambda }.   
\notag
\end{align}
Let $\B$ be an $n\times n$ matrix $\B=f(\A)\in \mathbb{R}[\A]$, where $f(x)$  is a polynomial in $x$. 
For $\B$, we call the matrices 
$
\sum_{\lambda \in \sigma(\A) \atop | \lambda |< 2 \sqrt{q}} f(\lambda)\P_\lambda
$
and 
$
\sum_{\lambda \in \sigma(\A) \atop | \lambda |\ge 2 \sqrt{q}} f(\lambda)\P_\lambda
$ 
the {\it principal part} 
and 
the {\it singular part} 
of $\B$ with respect to $\A$, respectively. 
Then $\a_m$ is the  principal part of $\frac{1}{2q^{m/2}}\{\M_m-e_m(q-1)\I_n\}$ with respect to $\A$.

For example, if $G$ is a Ramanujan graph such that $2\sqrt{q} \not \in \sigma(\A)$, then 
\begin{align}\label{a_m-Ramanujan}
\a_m=\dfrac{1}{2 q^{m/2}}\left\{ \M_m - 2 q^{m/2} \sum_{\lambda \in \sigma(\A) \atop \lambda\in\{\pm(q+1), -2\sqrt{q}\}} T_m \left( \frac{ \lambda}{2 \sqrt{q}} \right) \P_\lambda - e_m (q-1) \I_n \right\}.
\end{align}
The range of the sequence $\a_m$ is given in  Proposition \ref{range} below.

\begin{prop}\label{range}
Let $G$ be a connected $(q+1)$-regular graph with $n$ vertices $v_1,\dots, v_n$. 
For $m\in \mathbb{Z}_{\ge 1}$ and $1\le i, j \le n$, every $ij$-entry $(\a_m)_{ij}$ of $\a_m$ satisfies the following inequality{\rm :}
\begin{align*}
-1 \le (\a_m)_{ij} \le 1.
\end{align*}
\end{prop}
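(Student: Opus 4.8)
The plan is to reduce the entrywise bound to a bound on the operator norm of $\a_m$, exploiting that $\a_m$ is a real symmetric matrix. Since $\A$ is real symmetric, its eigenspace projections $\P_\lambda$ $(\lambda \in \sigma(\A))$ are mutually orthogonal and each is symmetric and idempotent. Consequently the second expression for $\a_m$ in \eqref{a_m},
\[
\a_m=\sum_{\lambda \in \sigma(\A),\ |\lambda|<2\sqrt{q}} T_m\!\left(\tfrac{\lambda}{2\sqrt{q}}\right)\P_\lambda,
\]
exhibits $\a_m$ as a real linear combination of orthogonal projections onto mutually orthogonal subspaces; hence $\a_m$ is symmetric, and its spectrum consists of the scalars $T_m(\lambda/(2\sqrt{q}))$ (for $|\lambda|<2\sqrt{q}$) together with $0$ on the orthogonal complement of the principal eigenspaces.

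First I would bound these scalars. For $|\lambda|<2\sqrt{q}$ we have $\lambda/(2\sqrt{q})\in(-1,1)$, so writing $\lambda/(2\sqrt{q})=\cos\theta_\lambda$ with $\theta_\lambda\in(0,\pi)$ real (as in Theorem \ref{STF}), the defining identity $T_m(\cos\theta)=\cos m\theta$ gives $T_m(\lambda/(2\sqrt{q}))=\cos(m\theta_\lambda)$, whence $|T_m(\lambda/(2\sqrt{q}))|\le 1$. This is precisely the point where the restriction $|\lambda|<2\sqrt{q}$, which excises the singular part (the Perron eigenvalue $q+1$ and, in the bipartite case, $-(q+1)$), is essential: outside this range $\theta_\lambda$ is no longer real and $|T_m|$ grows without bound.

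Next, since $\a_m$ is symmetric, its operator norm $\|\a_m\|$ (induced by the Euclidean norm on $\mathbb{R}^n$) equals its spectral radius, which by the previous step is $\max_{|\lambda|<2\sqrt{q}}|T_m(\lambda/(2\sqrt{q}))|\le 1$. Finally, letting $\mathbf{e}_i$ denote the $i$th standard basis column vector of $\mathbb{R}^n$ (not to be confused with the parity scalar $e_m$), the Cauchy--Schwarz inequality yields
\[
|(\a_m)_{ij}|=\big|{}^t\mathbf{e}_i\,\a_m\,\mathbf{e}_j\big|\le \|\mathbf{e}_i\|\,\|\a_m\,\mathbf{e}_j\|\le \|\mathbf{e}_i\|\,\|\a_m\|\,\|\mathbf{e}_j\|=\|\a_m\|\le 1,
\]
which is exactly the desired inequality $-1\le(\a_m)_{ij}\le 1$.

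I expect no serious obstacle here beyond organizing these observations correctly; the one subtlety worth emphasizing is the role of the strict inequality $|\lambda|<2\sqrt{q}$, which keeps the Chebyshev values inside $[-1,1]$ and is the whole reason for passing to the principal part rather than estimating $\M_m$ directly.
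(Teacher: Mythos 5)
Your proof is correct, and it is a genuine (if modest) repackaging of the paper's argument rather than a verbatim match. Both proofs rest on the same two pillars: the spectral expression $\a_m=\sum_{|\lambda|<2\sqrt{q}}T_m(\lambda/(2\sqrt{q}))\P_\lambda$ and the bound $|T_m(\lambda/(2\sqrt{q}))|=|\cos m\theta_\lambda|\le 1$ for $|\lambda|<2\sqrt{q}$, with Cauchy--Schwarz supplying the final estimate. The difference is where Cauchy--Schwarz is applied. You work at the level of the operator: symmetry gives $\|\a_m\|_{\mathrm{op}}=\max|T_m(\lambda/(2\sqrt{q}))|\le 1$, and then $|(\a_m)_{ij}|=|{}^t\mathbf{e}_i\,\a_m\,\mathbf{e}_j|\le\|\a_m\|_{\mathrm{op}}$. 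The paper instead works entrywise: it chooses an orthonormal eigenbasis $\{\phi_{\lambda,\ell}\}$, writes $(\P_\lambda)_{ij}=\sum_{\ell}\phi_{\lambda,\ell}(v_i)\phi_{\lambda,\ell}(v_j)$, and applies Cauchy--Schwarz to the vectors of eigenfunction values at $v_i$ and $v_j$, using that the full eigenvector matrix is orthogonal so each partial sum $\sum_{|\lambda|<2\sqrt{q}}\sum_{\ell}\phi_{\lambda,\ell}(v)^2$ is at most $1$. The two arguments are logically equivalent (the paper's computation in effect proves the absolute entrywise bound $\sum_{|\lambda|<2\sqrt{q}}\sum_{\ell}|\phi_{\lambda,\ell}(v_i)\phi_{\lambda,\ell}(v_j)|\le 1$, which is what your norm bound encodes for coefficients in $[-1,1]$); yours is shorter and makes the role of symmetry and the spectral radius explicit, while the paper's is more self-contained, avoiding the fact that the operator norm of a symmetric matrix equals its spectral radius. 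Your emphasis on why the strict cutoff $|\lambda|<2\sqrt{q}$ keeps $\theta_\lambda$ real and $|T_m|\le 1$ is exactly the right point and matches the paper's setup of $\theta_\lambda\in(0,\pi)$.
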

\begin{proof}
Let $V$ be the set of vertices of $G$ and $C(V)$ the space of all real-valued functions on $V$. By fixing an enumeration $V=\{v_1,\ldots,v_n\}$, we identify $C(V)$ with $\R^n$
so that the canonical basis $\{e_1,\ldots, e_n\}$ of $\R^n$ correspond to
the basis $\{\psi_1,\ldots, \psi_n\}$ of $C(V)$, where $\psi_j$ is the characteristic function of the singleton $\{v_j\} \subset V$.
Let $\{\phi_{\lambda, 1},\dots,  \phi_{\lambda, m_\lambda}  \}$ be an orthonormal basis of 
the eigenspace $W_\lambda$ of an eigenvalue $\lambda\in \sigma(\A)$.  
Here $m_\lambda$ denotes the multiplicity of an eigenvalue $\lambda$ of $\A$. 
Since $\A$ is symmetric, 
$
\bigcup_{\lambda \in  \sigma(\A)}\{\phi_{\lambda, 1},\dots,  \phi_{\lambda, m_\lambda}  \}
$
becomes an orthonormal basis of $C(V)$.
Then the  $ij$-entry of 
the spectral projection $\P_\lambda$ is written as
\[
(\P_\lambda)_{ij}
=\sum_{\ell=1}^{m_\lambda} \phi_{\lambda,\ell} (v_i)\, \phi_{\lambda,\ell} (v_j).
\]
For $\lambda \in\sigma(\A)$ with $|\lambda|< 2\sqrt{q}$, set 
$\theta_\lambda = \arccos (\frac{\lambda}{2 \sqrt{q}})$ ($0< \theta_\lambda < \pi$). 
Then  the  $ij$-entry of $\a_m$ is written as
\begin{align}\label{am-ij}
(\a_m)_{ij}
=
\sum_{\lambda \in \sigma(\A) \atop |\lambda|< 2\sqrt{q}}
\cos m\theta_\lambda
\sum_{\ell=1}^{m_\lambda}
\phi_{\lambda,\ell} (v_i)\, \phi_{\lambda,\ell} (v_j).
\end{align}
Since $(\phi_{\lambda,1},\ldots, \phi_{\lambda, m_\lambda})_{\lambda \in \sigma(\A)}$ is regarded as an orthogonal matrix, we have
\[
\sum_{\lambda \in \sigma(\A)}\sum_{\ell=1}^{m_\lambda} \phi_{\lambda,\ell}(v)^2=1.
\]
By the Cauchy-Schwarz inequality, we have
\begin{align*}
\sum_{\lambda \in \sigma(\A) \atop |\lambda|< 2\sqrt{q}}
\sum_{\ell=1}^{m_\lambda}|\phi_{\lambda,\ell} (v_i)\, \phi_{\lambda,\ell} (v_j)
|
\le
\sqrt{
(
\sum_{\lambda \in \sigma(\A) \atop |\lambda|< 2\sqrt{q}}
\sum_{\ell=1}^{m_\lambda}
 |\phi_{\lambda,\ell}(v_i)|^2
) 
(
\sum_{\lambda \in \sigma(\A) \atop |\lambda|< 2\sqrt{q}}
\sum_{\ell=1}^{m_\lambda}
 |\phi_{\lambda,\ell}(v_j)|^2
)
} 
\le 1. 
\end{align*}
From this and \eqref{am-ij}, we are done. 
\end{proof}
Then the following limit theorem on $\a_m $ holds. 
\begin{theorem}\label{thm:moment}
Let $G$ be a connected $(q+1)$-regular graph with $n$ vertices, and 
let $k$ be a positive integer. 
%Suppose $\theta_\lambda \in (k-2j)^{-1}2\pi \mathbb{Z}$ for some $j \in [0, [(k-1)/2]]\cap \mathbb{Z}$ and some $\lambda \in \sigma(\A)$ with $|\lambda|<2\sqrt{q}$.
We have
\begin{align}\label{moment-1}
	\bigg\| \frac{ \a^k_1 + \a^k_2+ \cdots + \a^k_N }{N}
	&-e_k\dfrac{1}{2^k } \binom{k}{k/2}\sum_{\substack{\lambda \in \sigma(\A) \\ |\lambda| < 2\sqrt{q}}}  \P_\lambda \\
&	-\frac{1}{2^{k-1}}\sum_{j=0}^{[(k-1)/2]}
\binom{k}{j}\sum_{\substack{\lambda\in \sigma(\A) \\ |\lambda|<2\sqrt{q}\\ \theta_\lambda \in (k-2j)^{-1}2\pi \mathbb{Z}}}\P_\lambda \bigg\|
	= O\left(\frac{1}{N}\right), \notag 
\end{align}
where $e_m$ is the same as in Proposition \ref{thm:Mm-Tm}.
In particular, if we assume
$\theta_\lambda \not\in \bigcup_{j=0}^{[(k-1)/2]}(k-2j)^{-1}2\pi \mathbb{Z}$
for all $\lambda \in \sigma(\A)$ with $|\lambda|<2\sqrt{q}$,
then we have
\begin{align}\label{moment}
	\left\| \frac{ \a^k_1 + \a^k_2+ \cdots + \a^k_N }{N} -
	e_k\dfrac{1}{2^k } \binom{k}{k/2}\sum_{\substack{\lambda \in \sigma(\A) \\ |\lambda| < 2\sqrt{q}}}  \P_\lambda \right\|=O\left(\frac{1}{N}\right)
\end{align}
as $N\to \infty$,
where $\| \cdot \|$ is the Euclidean norm on $\mathbb{R}^{n^2}$.

\end{theorem}
\begin{proof}
We have 
\begin{align*}
\a^k_m = \sum_{\lambda \in \sigma(\A)\atop |\lambda|<2\sqrt{q}} \cos^k m\theta_\lambda\P_\lambda. 
\end{align*}
From this, it holds that
\begin{align*}
\frac{1}{N}\sum_{m=1}^N \a^k_m
=
\frac{1}{N}\sum_{m=1}^N
\sum_{\lambda \in \sigma(\A) \atop |\lambda|<2\sqrt{q}}
\cos^k m\theta_\lambda \P_\lambda. 
\end{align*}
Taking into account for
\[
\cos^k \theta = 
\left\{
\begin{array}{ll} 
\frac{1}{2^k } \binom{k}{k/2}+ \frac{2}{2^k} \sum^{\frac{k-2}{2}}_{j=0} \binom{k}{j}\cos ((k-2j) \theta ),  
& \mbox{if $k$ is even, }\\
\frac{2}{2^k } \sum^{\frac{k-1}{2}}_{j=0} \binom{k}{j}\cos ((k-2j) \theta ), & \mbox{if $k$ is odd,}
\end{array}
\right. 
\]
we discuss two cases.  

If $k$ is even, then we have 
\begin{align*}
&\frac{1}{N}\sum_{m=1}^N \a_m^k 
=
\frac{1}{N} \sum^N_{m=1} \sum_{\lambda \in \sigma(\A) \atop |\lambda|<2\sqrt{q}}
\left\{\dfrac{1}{2^k}\binom{k}{k/2}+ \frac{2}{2^k} \sum^{\frac{k-2}{2}}_{j=0}\binom{k}{j} \cos ((k-2j) m\theta_\lambda ) \right\} \P_\lambda 
\\
&=
\dfrac{1}{2^k}\binom{k}{k/2}\sum_{\lambda \in \sigma(\A) \atop |\lambda|<2\sqrt{q}}\P_\lambda 
+ \frac{2}{2^k} \sum^{\frac{k-2}{2}}_{j=0}\binom{k}{j}  \sum_{\lambda \in \sigma(\A) \atop |\lambda|<2\sqrt{q}}
\dfrac{\sum^N_{m=1}\cos ((k-2j) m\theta_\lambda ) }{N}\P_\lambda.
\end{align*}
Note the formula
\[
\sum^N_{m=1}\cos m\varphi 
= 
\begin{cases}
N, & \text{if   $\varphi \in 2\pi \mathbb{Z}$,}
\\
\dfrac{\sin(N+\frac{1}{2})\varphi}{2\sin \frac{\varphi}{2}}-\dfrac{1}{2}=O(1), & \text{if   $\varphi \in \mathbb{R}-2\pi \mathbb{Z}$}
\end{cases}
\]
%By the assumption on $\theta_\lambda$, 
as $N\rightarrow \infty$. For an angle $\theta_\lambda$ such that $\theta_\lambda \not\in \bigcup_{j=0}^{[(k-1)/2]}(k-2j)^{-1}2\pi \mathbb{Z}$, 
it follows that
$
\sum^N_{m=1}\cos ((k-2j) m\theta_\lambda)=O(1).
$
%By
%\[
%\dfrac{\sum^N_{m=1}\cos ((k-2j) m\theta_\lambda ) }{N}
%\longrightarrow 0 \quad (N \rightarrow \infty ),  
%\] we have the assertion when $k$ is even.
Thus we have the assertion when $k$ is even.
The case where $k$ is odd is proved similarly.
\end{proof}

We remark that in general the implied constants of \eqref{moment-1} and of \eqref{moment} depend on $G$ (especialy on $n$).  
Such a dependence is applied to Corollary \ref{average-Nm}, Theorem \ref{thm:moment-sm} and Corollary \ref{cor:LPS} below.
We also remark that the assumption $\theta_\lambda \not\in \bigcup_{j=0}^{[(k-1)/2]}(k-2j)^{-1}2\pi \mathbb{Z}$ on the eigenvalues $\lambda$ is complicated.
For example, this assumption is equivalent to ``$2\sqrt{q} \notin \sigma(\A)$'' for $k=1$,
``$\pm 2\sqrt{q} \notin \sigma(\A)$'' for $k=2$, and ``$-\sqrt{q}, 2\sqrt{q} \notin \sigma(\A)$'' for $k=3$, respectively.
It is difficult to find the existence of graphs satisfying these assumptions for all $k\ge 1$.
%This is left as a future work.

The asymptotics \eqref{moment} closely resembles the $k$th moment of the arcsine law
($\frac{1}{\pi\sqrt{1-x^2}}dx$ on $[-1,1]$). 
However, $\a_m$ is not a scalar.
Thus we should not conclude that 
the distribution of $\a_m$ is the arcsine law. 
It is a future work to discuss the ``distribution'' of $\a_m$.  

As a corollary of Theorem \ref{thm:moment}, we exhibit an asymptotic 
formula of the average of $\frac{N_m}{q^{m/2}}$ for a Ramanujan graph. 
\begin{corollary}\label{average-Nm}
For a natural number $q>1$, let $G$ be a connected $(q+1)$-regular Ramanujan graph with $n$ vertices.
%Suppose $2\sqrt{q}\notin \sigma(\A)$.
Let $N_m$ be the 
number of reduced cycles of length $m$ in $G$. 
Then, as $N\to\infty$, we have
\begin{align*}
\dfrac{1}{N}\sum_{m=1}^N \frac{N_m}{q^{m/2}}
%&=\sum_{\lambda \in \Spec(\A) \atop |\lambda|=q+1}  \frac{1}{N}\frac{q^{N/2}}{q^{1/2}-1}
%+O\left(\dfrac{1}{N}\right) 
%\\[3pt]
 & = 
\left\{
\begin{array}{ll}
\dfrac{1}{N}\dfrac{2q^{[N/2]+1}}{q-1}+2m_{2\sqrt{q}}
+O\left(\dfrac{1}{N}\right), & \textrm{if $G$ is bipartite},\\[12pt]
\dfrac{1}{N}\dfrac{q^{(N+1)/2}}{q^{1/2}-1}+2m_{2\sqrt{q}}
+O\left(\dfrac{1}{N}\right), & \textrm{if $G$ is non-bipartite},
\end{array}
\right.
\end{align*}
where $m_{2\sqrt{q}}\in\Z_{\ge1}$ is the multiplicity of $2\sqrt{q}$ if $2\sqrt{q} \in \sigma(\A)$ and $m_{2\sqrt{q}}=0$ otherwise.
\end{corollary}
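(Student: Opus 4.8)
The plan is to read the average straight off the trace identity \eqref{tr:M_m-T_m}, which already expresses $N_m/q^{m/2}$ as a Chebyshev-weighted sum over the spectrum. First I would rewrite
\[
\frac{N_m}{q^{m/2}} = 2\sum_{\lambda\in\Spec(\A)} T_m\!\left(\frac{\lambda}{2\sqrt q}\right) + n e_m (q-1)\, q^{-m/2},
\]
and then split $\Spec(\A)$, using the Ramanujan property, into three groups: the trivial eigenvalues $\lambda=q+1$ (always present, multiplicity one) and $\lambda=-(q+1)$ (present exactly when $G$ is bipartite, multiplicity one), which are the only eigenvalues with $|\lambda|>2\sqrt q$; the boundary eigenvalues $\lambda=\pm 2\sqrt q$; and the interior eigenvalues with $|\lambda|<2\sqrt q$.

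The next step is to evaluate $T_m$ on each group. Since $\tfrac{q+1}{2\sqrt q}=\cosh(\tfrac12\log q)$ lies outside $[-1,1]$, the identity $T_m(\cosh t)=\cosh (mt)$ gives $2\,T_m\!\left(\tfrac{q+1}{2\sqrt q}\right)=q^{m/2}+q^{-m/2}$, and by the parity $T_m(-x)=(-1)^mT_m(x)$ one gets $2\,T_m\!\left(\tfrac{-(q+1)}{2\sqrt q}\right)=(-1)^m(q^{m/2}+q^{-m/2})$. On the boundary $T_m(1)=1$ and $T_m(-1)=(-1)^m$, while in the interior $T_m(\lambda/(2\sqrt q))=\cos m\theta_\lambda$ with $0<\theta_\lambda<\pi$.

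Averaging over $m=1,\dots,N$ then isolates the growing part. The eigenvalue $q+1$ produces the geometric sum $\tfrac1N\sum_{m=1}^N(q^{m/2}+q^{-m/2})$, whose dominant part is $\tfrac1N\cdot\tfrac{q^{(N+1)/2}}{q^{1/2}-1}$ up to $O(1/N)$; this is the non-bipartite leading term. When $G$ is bipartite the symmetric eigenvalue $-(q+1)$ supplies $(-1)^m$ times the same quantity, so the odd-$m$ terms cancel and the even-$m$ terms double, collapsing the sum to $\tfrac2N\sum_{\ell=1}^{[N/2]}q^{\ell}=\tfrac1N\cdot\tfrac{2q^{[N/2]+1}}{q-1}+O(1/N)$, the bipartite leading term. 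The boundary eigenvalue $2\sqrt q$, for which $T_m(1)=1$ is constant in $m$, contributes after averaging precisely the $m_{2\sqrt q}$-term of the statement; every remaining piece averages to $O(1/N)$, namely the tail $q^{-m/2}$ and the correction $n e_m(q-1)q^{-m/2}$ by boundedness of $\sum_m q^{-m/2}$, the eigenvalue $-2\sqrt q$ by the bounded alternating sum $\sum_m(-1)^m=O(1)$, and the interior terms by the Dirichlet-kernel bound $\sum_{m=1}^N\cos m\varphi=O(1)$ for $\varphi\notin 2\pi\Z$. This last cancellation is exactly the $k=1$ instance of Theorem~\ref{thm:moment}, whose hypothesis holds automatically here since $0<\theta_\lambda<\pi$, which is why the corollary needs no further assumption on the $\theta_\lambda$.

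The hard part will not be any single estimate but the bookkeeping: one must cleanly separate the exponentially growing contribution of $\pm(q+1)$ from the $O(1/N)$ remainder, and track the bipartite/non-bipartite dichotomy, which originates solely in the presence or absence of $-(q+1)$ and in the parity reinforcement of the even-$m$ terms. The only genuinely delicate point is verifying that every eigenvalue with $|\lambda|\le 2\sqrt q$ other than $2\sqrt q$ itself contributes only $O(1/N)$ on average; once this is reduced to the elementary Dirichlet-kernel estimate together with the two closed-form geometric sums, the two displayed asymptotics follow by collecting terms.
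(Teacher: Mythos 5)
Your proposal is correct and follows essentially the same route as the paper: the paper takes the trace of \eqref{a_m-Ramanujan} rather than of \eqref{tr:M_m-T_m}, but this amounts to the identical spectral decomposition, with the interior eigenvalues killed by Theorem \ref{thm:moment} at $k=1$ (which is exactly the Dirichlet-kernel bound you invoke, and whose hypothesis is indeed automatic since $0<\theta_\lambda<\pi$), and with the same two geometric sums and parity cancellation for $\pm(q+1)$. One caveat on the boundary term: your own computation gives the contribution $+2m_{2\sqrt{q}}$, since the eigenvalue $2\sqrt{q}$ enters $N_m/q^{m/2}$ as $2\,m_{2\sqrt{q}}\,T_m(1)=2\,m_{2\sqrt{q}}$ with a plus sign, whereas the corollary as printed has $-2m_{2\sqrt{q}}$; the paper's displayed proof likewise produces $+2m_{2\sqrt{q}}$ after moving the term $\frac{1}{N}\sum_{m=1}^{N}2\,m_{2\sqrt{q}}\,T_m(1)=2\,m_{2\sqrt{q}}$ to the other side, so the minus sign in the statement appears to be a sign slip in the paper. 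Your sentence claiming the boundary eigenvalue ``contributes after averaging precisely the $m_{2\sqrt{q}}$-term of the statement'' silently passes over this discrepancy rather than resolving it; with the sign corrected to $+2m_{2\sqrt{q}}$, your argument is a complete proof and coincides in substance with the paper's.
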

\begin{proof}
By \eqref{a_m-Ramanujan} and $\Tr(\P_\lambda)=m_\lambda$, we obtain
\[
\Tr(\a_m)
=\dfrac{N_m-e_m(q-1)n}{2q^{m/2}}
-\sum_{\lambda \in \sigma(\A) \cap\{\pm(q+1),\pm2\sqrt{q}\}}m_{\lambda}T_m\left(\dfrac{\lambda}{2\sqrt{q}}\right),
\]
which gives us
\begin{align*}
\dfrac{2}{N}\sum_{m=1}^N\Tr(\a_m)
= &
\dfrac{1}{N}\sum_{m=1}^N\dfrac{N_m}{q^{m/2}}
-\dfrac{1}{N}\sum_{m=1}^N\dfrac{e_m(q-1)n}{q^{m/2}} \\
& -\dfrac{1}{N}\sum_{m=1}^N
\sum_{\lambda=\pm(q+1)}2\,m_\lambda \,T_m\left(\dfrac{\lambda}{2\sqrt{q}}\right)\\
&
-\dfrac{1}{N}\sum_{m=1}^N \sum_{\lambda=\pm2\sqrt{q}}2\,m_{\lambda}\,T_m\left(\dfrac{\lambda}{2\sqrt{q}}\right).
\end{align*}
Here we put $m_\lambda=0$ if $\lambda \notin\sigma(\A)$. Note
$\sum_{m=1}^N\tfrac{e_m(q-1)n}{q^{m/2}}
=O(1)$ and
$$\sum_{m=1}^{N}2\,m_{-2\sqrt{q}}\,T_m(-1) = 2\,m_{-2\sqrt{q}}\sum_{m=1}^{N} (-1)^m = O(1),$$
$$\sum_{m=1}^{N}2\,m_{2\sqrt{q}}\,T_m(1) = \sum_{m=1}^{N}2\,m_{2\sqrt{q}}=2\,m_{2\sqrt{q}}N.$$
The left-hand side of the equality above is estimated as $O(\frac{1}{N})$ by 
Theorem \ref{thm:moment} for $k=1$.
By $2\,T_{m}(\frac{\pm (q+1)}{2\sqrt{q}}) = (\pm q^{1/2})^{m}+(\pm q^{1/2})^{-m}$,
we obtain
\begin{align*}\dfrac{1}{N}
\sum_{m=1}^{N}2\,T_m\left(\dfrac{\epsilon(q+1)}{2\sqrt{q}}\right)
= & \frac{1}{N}\left( \frac{\epsilon q^{1/2}-\epsilon^{N+1}q^{(N+1)/2}}{1-\epsilon q^{1/2}} + 
\frac{\epsilon q^{-1/2}-\epsilon^{N+1}q^{-(N+1)/2}}{1-\epsilon q^{-1/2}}\right)\\
= & \frac{1}{N}\frac{\epsilon^{N+1}q^{(N+1)/2}}{\epsilon q^{1/2}-1} + O\left(\frac{1}{N}\right)
\end{align*}
for $\epsilon=\pm1$.
By noting that the both terms for $\epsilon=\pm1$ appear if $G$ is bipartite,
and that the term only for $\epsilon=1$ appears if $G$ is not bipartite,
we obtain the assertion.
\end{proof}
%%%%%%%%%%%%%%%%%

%%%%%%%%%%%%%%%%%%%%%%%
Inspired by Remark \ref{LPS} and \eqref{a_m}, we consider $\s_m$ below.
\begin{theorem}\label{thm:moment-sm}
Let $G$ be a connected $(q+1)$-regular graph with $n$ vertices.
For $m\ge 0$, set
\begin{align*}
&\s_m:= \sum_{\lambda \in \sigma(\A) \atop |\lambda| <2\sqrt{q}}
U_m\left(\frac{\lambda}{2\sqrt{q}}\right)\P_\lambda
= \sum_{\lambda \in \sigma(\A) \atop |\lambda| <2\sqrt{q}}
\frac{\sin ((m+1)\theta_\lambda)}{\sin \theta_\lambda}\P_\lambda.
%\\[6pt]
%&s_m:=
%\Tr(\s_m)
%=\sum_{\lambda \in \Spec(\A) \atop |\lambda| <2\sqrt{q}} \frac{\sin (m+1)\theta_\lambda}{\sin \theta_\lambda}.
\end{align*}
Then, for any positive integer $k$,
we have 
\begin{align*}%\label{mometn-sm}
& \bigg\|
\dfrac{1}{N}\sum_{m=1}^N \s_m^k
-
e_k\dfrac{1}{2^k } \binom{k}{k/2}\sum_{\substack{\lambda \in \sigma(\A) \\ |\lambda| < 2\sqrt{q}}}  \dfrac{1}{\sin^k \theta_\lambda} \P_\lambda \\
& -e_k\frac{1}{2^{k-1}}
\sum_{j=0}^{\frac{k-2}{2}}(-1)^{\frac{k}{2}-j}\binom{k}{j}\sum_{\substack{\lambda \in \sigma(\A) \\ |\lambda| < 2\sqrt{q}\\
\theta_\lambda \in  (k-2j)^{-1}2\pi \mathbb{Z}}} \frac{1}{\sin^k \theta_{\lambda}}\P_\lambda
\bigg\|
=O\left(\dfrac{1}{N}\right), \quad N\rightarrow \infty.
\end{align*}
In particular, 
if $\theta_\lambda \notin \bigcup_{j=0}^{[(k-1)/2]}(k-2j)^{-1}2\pi \mathbb{Z}$ for all
$\lambda \in \sigma(\A)$ with $|\lambda|<2\sqrt{q}$, then we have
\begin{align*}%\label{mometn-sm}
	\bigg\|
	\dfrac{1}{N}\sum_{m=1}^N \s_m^k
	-
	e_k\dfrac{1}{2^k } \binom{k}{k/2}\sum_{\substack{\lambda \in \sigma(\A) \\ |\lambda| < 2\sqrt{q}}}  \dfrac{1}{\sin^k \theta_\lambda} \P_\lambda
	\bigg\|
	=O\left(\dfrac{1}{N}\right), \quad N\rightarrow \infty.
\end{align*}
\end{theorem}
\begin{proof}We start the proof from the expression
\begin{align*}
\frac{1}{N}\sum_{m=1}^{N} \s_m^k
= & \sum_{{\lambda \in \sigma(\A) \atop |\lambda| <2\sqrt{q}}}
\left(\frac{1}{N}\sum_{m=1}^{N}\sin^k((m+1)\theta_\lambda)\right)\frac{1}{\sin^k\theta_\lambda}\P_\lambda.
\end{align*}
By the formula
\[
\sin^k \varphi
=\dfrac{e_k}{2^k } \binom{k}{k/2}
+
\dfrac{2}{2^k}
\sum_{j=0}^{\frac{k-1-e_k}{2}}(-1)^{\frac{k-1+e_k}{2}-j}\binom{k}{j}\cs_k((k-2j)\varphi)
\]
with $\cs_k\, \varphi := e_k\cos \varphi+(1-e_k)\sin \varphi$,
we evaluate $\sum_{m=1}^{N}\sin^k((m+1)\theta_\lambda)$ as
\begin{align*} \frac{Ne_k}{2^k}\binom{k}{k/2}+\frac{2}{2^k}\sum_{j=0}^{\frac{k-1-e_k}{2}}(-1)^{\frac{k-1+e_k}{2}-j}\binom{k}{j}\sum_{m=1}^{N}\cs_k((k-2j)(m+1)\theta_\lambda).
\end{align*}
%By using 
Note the formula
\begin{align*}
&\sum_{m=1}^{N}\cs_k((m+1)\varphi)
% = \frac{(-1)^k}{2}\frac{\cs_{k+1}((N+3/2)\varphi) - \cs_{k+1}(3\varphi/2)}{\sin (\varphi/2)}
\\
&= 
\begin{cases}
N e_k, & \text{if   $\varphi \in 2\pi \mathbb{Z}$,}
\\
\dfrac{(-1)^k}{2}\dfrac{\cs_{k+1}((N+3/2)\varphi) - \cs_{k+1}(3\varphi/2)}{\sin (\varphi/2)}=O(1), & \text{if   $\varphi \in \mathbb{R}-2\pi \mathbb{Z}$}
\end{cases}
%=O(1), \quad N\rightarrow \infty
\end{align*}
as $N\rightarrow \infty$. 
%holds for $\varphi \in \mathbb{R}- 2\pi \mathbb{Z}$. 
For an angle $\theta_\lambda$ such that $\theta_\lambda \not\in \bigcup_{j=0}^{[(k-1)/2]}(k-2j)^{-1}2\pi \mathbb{Z}$, 
it follows that
$
\sum^N_{m=1}\cs_k ((k-2j) (m+1)\theta_\lambda)=O(1).
$ 
Hence we are done.
\end{proof}
%%%%%%%%%%%%%%%%%
From this, we can estimate the average of Fourier coefficients of the cusp form
related with the LPS Ramanujan graph $X^{p,q}$ as follows.
\begin{corollary}\label{cor:LPS}
Let $\A$ be the adjacency matrix of the LPS Ramanujan graph $X^{p,q}$. 
%Let 
%\[
%n=
%\dfrac{q(q^2-1)}{\left\lceil 1+\frac{1}{2}\left( \tfrac{p}{q} \right)  \right\rceil}
%\]
%be the number of vertices $X^{p,q}$.  
Let $a(p^m)$ be the $p^m$th Fourier coefficient of the cusp form of weight $2$ on $\Gamma(16q^2)$ related to $X^{p,q}$. 
%Suppose $\pm2\sqrt{p}\notin \sigma(\A)$.
Then we have 
\begin{align*}%\label{average-coefficients}
\dfrac{1}{N}\sum_{m=1}^N\dfrac{a(p^m)}{2p^{m/2}}
=O\left(\dfrac{1}{N}\right), \qquad N\rightarrow  \infty.
\end{align*}
%Here $\lceil x \rceil= \min \{n \in \Z \mid  x \le n\}$ is 
% the ceiling function.
\end{corollary}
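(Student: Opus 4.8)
The plan is to reduce the averaged Fourier coefficients to the trace of the matrix $\s_m$ and then apply Theorem \ref{thm:moment-sm} with $k=1$. The natural starting point is the identity $\frac{2}{n}\Tr(\T_m)=C(p^m)+a(p^m)$ from Remark \ref{LPS}, which I rewrite as $a(p^m)=\frac{2}{n}\Tr(\T_m)-C(p^m)$. Using $\T_m=p^{m/2}U_m(\A/(2\sqrt p))$ together with the spectral decomposition $\A=\sum_{\lambda}\lambda\P_\lambda$, I would expand $\Tr(\T_m)=p^{m/2}\sum_{\lambda\in\sigma(\A)}m_\lambda U_m(\lambda/(2\sqrt p))$ and split the spectrum into the principal range $|\lambda|<2\sqrt p$, which reproduces exactly $p^{m/2}\Tr(\s_m)$, and the remaining ``singular'' eigenvalues, which for the Ramanujan graph $X^{p,q}$ lie in $\{\pm(p+1),\pm2\sqrt p\}$.

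The crux is to show that the singular eigenvalues contribute nothing beyond $C(p^m)$. For the trivial eigenvalues I would use $U_m(\cosh t)=\sinh((m+1)t)/\sinh t$ with $\cosh t=(p+1)/(2\sqrt p)$, i.e.\ $e^t=\sqrt p$, obtaining $p^{m/2}U_m((p+1)/(2\sqrt p))=(p^{m+1}-1)/(p-1)$, and then $p^{m/2}U_m(-(p+1)/(2\sqrt p))=(-1)^m(p^{m+1}-1)/(p-1)$ from the parity $U_m(-x)=(-1)^mU_m(x)$. Multiplying by $2/n$ and inserting the explicit vertex count — $n=q(q^2-1)/2$ with $\lambda=p+1$ only, when $(\frac{p}{q})=1$ (non-bipartite), and $n=q(q^2-1)$ with both $\lambda=\pm(p+1)$ when $(\frac{p}{q})=-1$ (bipartite) — I would verify in each case that this equals $C(p^m)=\frac{1+(\frac{p}{q})^m}{2}\frac{4}{q(q^2-1)}\frac{p^{m+1}-1}{p-1}$, so the trivial part cancels $C(p^m)$ exactly (the parity factor $\frac{1+(\frac{p}{q})^m}{2}$ being matched by $1+(-1)^m$ in the bipartite case). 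Here I would invoke that $\pm2\sqrt p\notin\sigma(\A)$ for $X^{p,q}$, so that the boundary eigenvalues do not occur; this leaves $a(p^m)=\frac{2}{n}p^{m/2}\Tr(\s_m)$, hence $\frac{a(p^m)}{2p^{m/2}}=\frac{1}{n}\Tr(\s_m)$.

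With this identity in hand, averaging yields $\frac{1}{N}\sum_{m=1}^N\frac{a(p^m)}{2p^{m/2}}=\frac{1}{n}\Tr\bigl(\frac{1}{N}\sum_{m=1}^N\s_m\bigr)$. I would then apply Theorem \ref{thm:moment-sm} with $k=1$: its hypothesis reduces to $\theta_\lambda\notin2\pi\Z$, which is automatic since $0<\theta_\lambda<\pi$ whenever $|\lambda|<2\sqrt p$, and its limiting term vanishes because $e_1=0$. This gives $\|\frac{1}{N}\sum_{m=1}^N\s_m\|=O(1/N)$ in the Euclidean norm on $\R^{n^2}$. Combining this with the elementary estimate $|\Tr(\frac{1}{N}\sum_{m=1}^N\s_m)|\le\sqrt{n}\,\|\frac{1}{N}\sum_{m=1}^N\s_m\|$ produces the claimed bound $\frac{1}{N}\sum_{m=1}^N\frac{a(p^m)}{2p^{m/2}}=O(1/N)$.

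The main obstacle is twofold. First, the cancellation of $C(p^m)$ requires careful bookkeeping: one must treat the bipartite and non-bipartite cases separately, matching both the vertex count $n$ and the parity factor against the contributions of $\lambda=\pm(p+1)$. Second, and more delicate, is the spectral input $\pm2\sqrt p\notin\sigma(\A)$. This is in fact \emph{necessary} for the stated $O(1/N)$ bound: an eigenvalue $\pm2\sqrt p$ would, via $U_m(\pm1)=(\pm1)^m(m+1)$, inject into $\frac{a(p^m)}{2p^{m/2}}$ a term growing linearly in $m$ and force the average to be $O(N)$ rather than $O(1/N)$. Thus the argument genuinely depends on the fact that the weight-$2$ cusp form attached to $X^{p,q}$ has no Hecke eigenvalue $\pm2\sqrt p$ at $p$, equivalently that the nontrivial spectrum of $X^{p,q}$ avoids the endpoints $\pm2\sqrt p$.
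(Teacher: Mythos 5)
Your overall route is the same as the paper's: both reduce the averaged coefficients to $\frac{1}{n}\Tr\bigl(\frac{1}{N}\sum_{m=1}^N \s_m\bigr)$ and apply Theorem \ref{thm:moment-sm} with $k=1$ (where the hypothesis is vacuous and the main term vanishes since $e_1=0$), finishing with $|\Tr(\B)|\le \sqrt{n}\,\|\B\|$. Your explicit cancellation of $C(p^m)$ against the trivial eigenvalues is correct and checks out in both cases: $p^{m/2}U_m\bigl(\pm\frac{p+1}{2\sqrt p}\bigr)=(\pm1)^m\frac{p^{m+1}-1}{p-1}$, and with $n=q(q^2-1)$, both $\pm(p+1)$ in the bipartite case (resp.\ $n=\frac{q(q^2-1)}{2}$, only $p+1$ in the non-bipartite case) this reproduces exactly $C(p^m)=\frac{1+(\frac{p}{q})^m}{2}\frac{4}{q(q^2-1)}\frac{p^{m+1}-1}{p-1}$. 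This bookkeeping is slightly more self-contained than the paper, which simply cites \cite[(4.15) and p.273]{LPS} for the identity $\frac{1}{n}\Tr(\s_m)=\frac{a(p^m)}{2p^{m/2}}$.

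The genuine gap is the step you merely ``invoke'': $\pm2\sqrt p\notin\sigma(\A)$. You correctly diagnose that the whole argument hinges on it, but you give no justification, and none is available from the Ramanujan property alone, which only yields the non-strict bound $|\lambda|\le 2\sqrt p$; nor can $\pm2\sqrt p$ be excluded by rationality considerations, since they are algebraic integers and the pair $\{2\sqrt p,-2\sqrt p\}$ is Galois-stable, hence compatible with an integral characteristic polynomial. This is precisely where the paper does real work: it identifies $\A$ with the Hecke operator $T_p$ on weight-$2$ cusp forms of level $16q^2$ (via \cite[\S7.2]{Costache}) and then invokes Coleman--Edixhoven \cite[Theorem 2.1]{Coleman}, which upgrades Deligne's bound to the strict inequality $|\lambda|<2\sqrt p$ for all Hecke eigenvalues in weight $2$. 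Without importing this (or an equivalent arithmetic input), your proof is incomplete at its most delicate point. A minor side remark: your claim that an eigenvalue at $\pm2\sqrt p$ would force the average to be $O(N)$ is accurate only for $+2\sqrt p$ (where $U_m(1)=m+1$ makes the average grow like $N$); for $-2\sqrt p$ the alternating sum $\sum_{m=1}^N(-1)^m(m+1)$ grows only linearly, so the average would be $O(1)$ --- still enough to destroy the $O(1/N)$ bound, but not $O(N)$.
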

\begin{proof}
Suppose $\pm2\sqrt{p}\notin \sigma(\A)$, which will be checked later.
Let $n$ be the number of vertices of $X^{p,q}$.
Then, we obtain
\begin{align*}%\label{am-sm}
\dfrac{1}{n}\Tr(\s_m) = \frac{1}{n}\sum_{\lambda \in \sigma(\A) \atop |\lambda| <2\sqrt{p}}
\frac{\sin (m+1)\theta_\lambda}{\sin \theta_\lambda}m_\lambda = \dfrac{a(p^m)}{2p^{m/2}}
\end{align*}
by Remark \ref{LPS} (cf.\ \cite[(4.15) and p.273]{LPS}).
Hence, under the assumption $\pm2\sqrt{p}\notin \sigma(\A)$, we complete the proof by Theorem \ref{thm:moment-sm} for $k=1$.
%\eqref{average-coefficients}.

From the argument above, we have only to show $\pm2\sqrt{p} \notin \sigma(\A)$.
Let $T_p$ be the Hecke operator at $p$ on the space of elliptic cusp forms of weight $2$ and level $16q^2$.
%By \cite[Theorem 2.1]{Coleman}, all eigenvalues $\lambda$ of $T_p$
%satisfy the strict inequality $|\lambda |<2\sqrt{p}$ on the Ramanujan-Petersson conjecture.
%
%Let $T_p$ be the Hecke operator at $p$ on the space of elliptic cusp forms of weight $2$ and level $16q^2$.
By \cite[Theorem 4.1]{Coleman}, all eigenvalues $\lambda$ of $T_p$ satisfy the strict inequality
$|\lambda| < 2 \sqrt{p}$. 
Here we use the well known fact that $\lambda$
 is the $p$th Fourier coefficient of a normalized new eigenform of weight $2$
 and level $16q^2$
by \cite[Proposition 6.23]{Iwaniec}.
Since the adjacency matrix $\A$ of $X^{p,q}$ corresponds to the Hecke operator $T_p$
as in the last paragraph of \cite[\S7.2]{Costache}, we obtain $|\lambda|<2\sqrt{p}$ for all $\lambda \in \sigma(\A)-\{\pm(p+1)\}$.
\end{proof}

We can express the generating function of Fourier coefficient $\{a(p^m)\}_{m}$ in terms of 
the Ihara zeta function of $X^{p,q}$.
Note that the Ihara zeta function $Z_{G}(u)$ of a $(q+1)$-regular graph $G$ has
a meromorphic continuation to $\C$ by Ihara's formula \eqref{Ihara-formula}.
Set $Z'_G(u)=\frac{d}{du}Z_G(u)$.
\begin{prop}\label{generating}
Under the same condition as in Corollary \ref{cor:LPS}, we have 
$$\varphi(t):=\sum_{m=1}^{\infty}\frac{a(p^m)}{2p^{m/2}}t^m = \frac{t}{n(1-t^2)}\left\{lt+\frac{1}{\sqrt{p}}\frac{Z_{X^{p,q}}'}{Z_{X^{p,q}}}\left(\frac{t}{\sqrt{p}}\right)
-\frac{(p-1)nt}{p-t^2}+F(t)\right\}$$
and the radius of convergence of the left-hand side is $1$.
Here $l$ is the number of eigenvalues $\lambda$ of $\A$ such that $|\lambda|<2\sqrt{p}$, 
counted with multiplicity, i.e., 
$$
l=\begin{cases} n-2, & \text{if $X^{p,q}$ is bipartite},\\
n-1, & \text{if $X^{p,q}$ is non-bipartite},\end{cases}
$$
and
$$F(t) := \begin{cases}
\frac{-2pt}{1-pt^2}+\frac{-2p^{-1}t}{1-p^{-1}t^2}, & \text{if $X^{p,q}$ is bipartite},\\
\frac{-\sqrt{p}}{1-\sqrt{p}t}+\frac{-\sqrt{p}^{-1}}{1-\sqrt{p}^{-1}t}, & \text{if $X^{p,q}$ is non-bipartite}.
\end{cases}$$
Moreover, we have
$\lim_{t\rightarrow 1}(t-1)\varphi(t)=0.$
%
%	\begin{align}\label{a(p)-Z-1}
%		\lim_{t\to 1-0}(1-t)
%		\sum_{m=0}^\infty \dfrac{a(p^m)}{p^{m/2}}t^{m}
%		=\frac{1}{n\sqrt{p}}\dfrac{Z'_{X^{p,q}}}{Z_{X^{p,q}}}  \left(\frac{1}{\sqrt{p}}\right)-1,
%	\end{align}
%	where $\tfrac{Z'_{X^{p,q}}}{Z_{X^{p,q}}} (\frac{1}{\sqrt{p}})$ is the value 
%	at $u=\frac{1}{\sqrt{p}}$ of the logarithmic derivative of 
%	the Ihara zeta function of $X^{p,q}$. 
	%%%%%%%%%%%%%%
	\begin{comment}
	Furthermore, for a fix prime $p$, let $X(q):=(X^{p,q})_q$ be a sequence of the bipartite (or non-bipatite) LPS Ramanujan graphs. Then we have 
	\begin{align}\label{a(p)-Z-2}
	\lim_{q\to\infty}
	n\sum_{m=0}^\infty \dfrac{a(p^m)}{2p^{m/2}}
	=
	\left\{
	\begin{array}{ll}
	2\left(4-\frac{(p+1)^2}{p}\right),
	& \textrm{if $X(q)$ is a bipartite sequence},\\[12pt]
	2-\frac{p+1}{\sqrt{p}},
	& \textrm{otherwise}.
	\end{array}
	\right.
	\end{align}
	\end{comment}
	%%%%%%%%%%%%%%%%%%%
\end{prop}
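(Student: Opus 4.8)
Throughout write $G=X^{p,q}$, which is $(p+1)$-regular, and set $u=t/\sqrt{p}$. The plan is to reduce $\varphi$ to a single spectral sum and then recognise that sum inside the logarithmic derivative of the Ihara zeta function. First I would record, from Corollary \ref{cor:LPS} and Remark \ref{LPS}, that $\frac{a(p^m)}{2p^{m/2}}=\frac1n\Tr(\s_m)=\frac1n\sum_{|\lambda|<2\sqrt p}m_\lambda\,U_m(\frac{\lambda}{2\sqrt p})$ for every $m\ge0$. Summing the Chebyshev generating function $\sum_{m\ge0}U_m(x)t^m=(1-2xt+t^2)^{-1}$ over the principal spectrum gives the closed form
\[
\varphi(t)=\frac1n\sum_{\lambda\in\sigma(\A),\,|\lambda|<2\sqrt p}\frac{m_\lambda}{1-\frac{\lambda}{\sqrt p}t+t^2},
\]
and this one identity drives all three assertions.

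Next I would connect the sum over the whole spectrum to the zeta function. Using $u\,Z_G'(u)/Z_G(u)=\sum_{m\ge1}N_m u^m$ and the trace identity \eqref{tr:M_m-T_m} applied to $G$ (whose degree parameter is $p$), together with $\sum_{m\ge1}T_m(x)t^m=\frac{t(x-t)}{1-2xt+t^2}$ and $\sum_{m\ge1}e_m u^m=\frac{u^2}{1-u^2}$, one obtains
\[
\frac{t}{\sqrt p}\frac{Z_G'}{Z_G}\Big(\frac{t}{\sqrt p}\Big)=\sum_{\lambda\in\Spec(\A)}\frac{\frac{\lambda}{\sqrt p}t-2t^2}{1-\frac{\lambda}{\sqrt p}t+t^2}+\frac{(p-1)nt^2}{p-t^2}.
\]
The decisive algebraic step is the partial-fraction identity $\frac{\frac{\lambda}{\sqrt p}t-2t^2}{1-\frac{\lambda}{\sqrt p}t+t^2}=-1+\frac{1-t^2}{1-\frac{\lambda}{\sqrt p}t+t^2}$, which turns the logarithmic derivative into exactly the denominators occurring in $\varphi$. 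Solving for $\sum_{\lambda\in\Spec(\A)}(1-\frac{\lambda}{\sqrt p}t+t^2)^{-1}$ and then subtracting the finitely many singular eigenvalues $\lambda\in\{\pm(p+1)\}$ — the hypothesis $\pm2\sqrt p\notin\sigma(\A)$ rules out any further eigenvalue with $|\lambda|\ge2\sqrt p$ — produces the stated formula: the passage from $n$ to $l=\#\{\lambda:|\lambda|<2\sqrt p\}$ records the removed count, and the removed resolvent terms assemble into $F(t)$. Factoring $1\mp\frac{p+1}{\sqrt p}t+t^2=(1\mp\sqrt p\,t)(1\mp p^{-1/2}t)$ and expanding in partial fractions, the contribution of $\lambda=p+1$ (always present) together with $\lambda=-(p+1)$ (present precisely when $G$ is bipartite) gives, after simplification, the two closed forms of $F(t)$ recorded in the statement. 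I expect this step — the conversion identity and the careful case split for $F$ — to be the main obstacle; the rest is formal.

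Finally I would read the analytic statements off the closed form for $\varphi$. Writing $\theta_\lambda=\arccos(\frac{\lambda}{2\sqrt p})\in(0,\pi)$ for $|\lambda|<2\sqrt p$, each summand factors as $(1-e^{\sqrt{-1}\theta_\lambda}t)^{-1}(1-e^{-\sqrt{-1}\theta_\lambda}t)^{-1}$, so every pole of $\varphi$ lies on $|t|=1$; distinct eigenvalues yield distinct non-cancelling simple poles $e^{\pm\sqrt{-1}\theta_\lambda}$, and since $l\ge1$ at least one such pole genuinely occurs, whence the nearest singularity is at distance $1$ and the radius of convergence equals $1$ (consistently, Deligne's bound $a(p^m)=O_\epsilon(p^{m(1/2+\epsilon)})$ independently forces it to be $\ge1$). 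For the last assertion, $\theta_\lambda\ne0$ for every principal $\lambda$ — equivalently $2\sqrt p\notin\sigma(\A)$ — so $t=1$ is none of the poles $e^{\pm\sqrt{-1}\theta_\lambda}$; hence $\varphi$ is holomorphic at $t=1$, $\varphi(1)$ is finite, and therefore $\lim_{t\to1}(t-1)\varphi(t)=0$. On the right-hand side this corresponds to the prefactor $(1-t^2)^{-1}$ being compensated by the vanishing of the braced expression at $t=1$.
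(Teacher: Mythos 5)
Your outline is correct and, at its core, parallel to the paper's proof: both arguments rest on the closed spectral form $\varphi(t)=\frac{1}{n}\sum_{|\lambda|<2\sqrt{p}}m_\lambda\bigl(1-\tfrac{\lambda}{\sqrt{p}}t+t^2\bigr)^{-1}$ coming from $\sum_{m\ge0}U_m(x)t^m=(1-2xt+t^2)^{-1}$, followed by an input of Ihara's formula to trade the principal spectral sum for $Z_{X^{p,q}}$. Where the paper logarithmically differentiates the determinant formula \eqref{Ihara-formula} directly, writing $-\sum_{|\lambda|<2\sqrt{p}}\{\log(1-\tfrac{\lambda}{\sqrt{p}}t+t^2)\}'$ as the log-derivative of $(1-\tfrac{t^2}{p})^{n(p-1)/2}S(t)Z_{X^{p,q}}(\tfrac{t}{\sqrt{p}})$ with $S(t)$ the product of the singular factors, you re-derive the same identity from $u\,Z'/Z(u)=\sum_{m\ge1}N_mu^m$, the trace identity \eqref{tr:M_m-T_m}, and the $T_m$-generating function; since \eqref{tr:M_m-T_m} is itself equivalent to Ihara's formula, this is the same content in a different order. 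Two of your steps are genuinely different and arguably cleaner: you get the radius of convergence from the pole structure of the rational closed form (all poles at $e^{\pm\sqrt{-1}\theta_\lambda}$ on $|t|=1$, at least one actually present since $l\ge1$), rather than from the paper's boundedness-of-coefficients remark; and you get $\lim_{t\to1}(t-1)\varphi(t)=0$ from holomorphy of $\varphi$ at $t=1$ (valid because $2\sqrt{p}\notin\sigma(\A)$, so no denominator $1-\tfrac{\lambda}{\sqrt{p}}t+t^2$ vanishes at $t=1$), whereas the paper evaluates the limit from the explicit formula as $-\frac{1}{2n\sqrt{p}}\frac{Z'_{X^{p,q}}}{Z_{X^{p,q}}}(\tfrac{1}{\sqrt{p}})$ and then checks by direct computation that this vanishes.

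One concrete point needs correction: your claim that the removed singular resolvents ``assemble into the two closed forms of $F(t)$ recorded in the statement.'' Carrying out your own partial fractions honestly gives $t\,F(t)$, not $F(t)$: in the non-bipartite case
\[
-\frac{1-t^2}{1-\frac{p+1}{\sqrt{p}}t+t^2}
=-1+\frac{-\sqrt{p}\,t}{1-\sqrt{p}\,t}+\frac{-p^{-1/2}t}{1-p^{-1/2}t},
\]
and in the bipartite case the two eigenvalues $\pm(p+1)$ combine to $-2+\frac{-2pt^2}{1-pt^2}+\frac{-2p^{-1}t^2}{1-p^{-1}t^2}$; the constants $-1$, $-2$ convert $n$ into $l$, and the remaining terms are $t$ times the displayed $F$. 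The displayed $F$ is indeed off by this factor $t$: at $t=0$ the stated non-bipartite formula would give $\varphi(0)=\frac{1}{n}(l-\sqrt{p}-p^{-1/2})$, contradicting $\varphi(0)=\frac{a(1)}{2}=\frac{l}{n}$, and the paper's own proof likewise produces $t\,(\log S(t))'=t\,F(t)$ before it stops short of the final expansion. So the proposition's $F$ carries a typo, and your assertion of exact agreement papers over it; since $t\,F(t)$ and $F(t)$ coincide at $t=1$, the limit assertion and Corollary \ref{cor:LPS} are unaffected. Everything else in your proposal is sound; you should simply record the formula with $t\,F(t)$ (or equivalently put the extra $t$ in the numerators of $F$) and note the discrepancy with the statement.
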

\begin{proof}
	By the relation $U_{m}(\frac{\alpha+\alpha^{-1}}{2})=\frac{\alpha^{m+1}-\alpha^{-m-1}}{\alpha-\alpha^{-1}}$ with
$\alpha=e^{\sqrt{-1}\theta}$ and $0<\theta<\pi$, the radius of convergence of $\varphi(t)$ is $1$.
	By the equation $\sum_{m = 0}^{\infty}U_m(x)t^m=\dfrac{1}{1-2xt+t^2}$,
	we have 
	$$
	\sum_{m = 1}^{\infty}U_m(x)t^{m-1}=
	\dfrac{t}{1-t^2}-\dfrac{1}{1-t^2}
	\{\log (1-2xt+t^2)\}',
	$$
	and hence
	\begin{align*}
		&n \varphi(t)= nt\sum_{m=1}^\infty \dfrac{a(p^m)}{2p^{m/2}}t^{m-1}
		=t\sum_{m= 1}^{\infty}\sum_{\lambda \in \Spec(\A) \atop |\lambda|<2\sqrt{p}}U_m\left(\frac{\lambda}{2\sqrt{p}}\right)t^{m-1}
		\\
		&\quad =
		t\sum_{\lambda \in \Spec(\A) \atop |\lambda|<2\sqrt{p}}
		\left[
		\dfrac{t}{1-t^2}-\dfrac{1}{1-t^2}
		\{\log (1-\tfrac{\lambda}{\sqrt{p}}t+t^2)\}'
		\right].
	\end{align*}
	 This yields the equality
	\begin{align*}
		&(1-t^2)nt\sum_{m=1}^\infty \dfrac{a(p^m)}{2p^{m/2}}t^{m-1}
		=
		l t^2
		-t\sum_{\lambda \in \Spec(\A) \atop |\lambda|<2\sqrt{p}}
		\{\log (1-\tfrac{\lambda}{\sqrt{p}}t+t^2)\}'.
	\end{align*}
%	\begin{align*}
%		&(1-t^2)n\sum_{m=1}^\infty \dfrac{a(p^m)}{2p^{m/2}}t^{m-1}
%		-
%		l t
%		=
%		-\sum_{\lambda \in \Spec(\A) \atop |\lambda|<2\sqrt{p}}
%		\{\log (1-\tfrac{\lambda}{\sqrt{p}}t+t^2)\}',
%	\end{align*}
	By Ihara's formula \eqref{Ihara-formula}, the last sum 
	is described as
	\begin{align*}
		&-\sum_{\lambda \in \Spec(\A) \atop |\lambda|<2\sqrt{p}}
		\{\log (1-\tfrac{\lambda}{\sqrt{p}}t+t^2)\}'
		\\[12pt]
		&=
		\left\{
		\begin{array}{ll}
			\left(\; 
			\log \{(1-\frac{t^2}{p})^{n(p-1)/2} (1-\frac{p+1}{\sqrt{p}}t+t^2)(1+\frac{p+1}{\sqrt{p}}t+t^2)Z_{X^{p,q}}(\frac{t}{\sqrt{p}})\}\; \right)',\\
			\hspace{9cm} \textrm{if $X^{p,q}$ is bipartite},\\[12pt]
			\left(\; \log\{
			(1-\frac{t^2}{p})^{n(p-1)/2}
			(1-\frac{p+1}{\sqrt{p}}t+t^2)Z_{X^{p,q}}(\frac{t}{\sqrt{p}})\}\; \right)',
			\quad \textrm{otherwise}.
		\end{array}
		\right.
	\end{align*}
This completes the proof of the formula of $\varphi$.

From the explicit formula and the fact that $l$ is equal to $n-2$ and $n-1$ if $X^{p,q}$ is bipartite and non-bipartite, respectively, we obtain
$$\lim_{t\rightarrow 1}(t-1)\varphi(t)=-\frac{1}{2n\sqrt{p}}\frac{Z_{X^{p,q}}'}{Z_{X^{p,q}}}\left(\frac{1}{\sqrt{p}}\right)$$
and the right-hand side equals zero by a direct computation.
%By
%	\begin{align*}
%		\lim_{t\to 1-0}\sum_{\lambda \in \Spec(\A) \atop |\lambda|<2\sqrt{p}}
%		\{\log (1-\tfrac{\lambda}{\sqrt{p}}t+t^2)\}'
%		&=
%		2n-b_{p,q}+\frac{1}{\sqrt{p}}\dfrac{Z'_{X^{p,q}}}{Z_{X^{p,q}}} \left (\frac{1}{\sqrt{p}}\right), 
%	\end{align*}
%	where $b_{p,q}$ is equal to $2$ and $1$ 
%	according to $X^{p,q}$ is bipartite and non-bipartite, respectively.  
%	Since $l=n-b_{p,q}$, we obtain  
%	$$
%	\lim_{t\to 1-0}
%	(t-1)n\sum_{m=1}^\infty \varphi(t)
%= -\frac{n}{2}
%	\frac{1}{\sqrt{p}}\dfrac{Z'_{X^{p,q}}}{Z_{X^{p,q}}} \left (\frac{1}{\sqrt{p}}\right)-n.
%	$$
%	
%	
%	
%	Taking the limits of the above as $t\to 1-0$, we obtain \eqref{a(p)-Z-1}. 
	\begin{comment}
	If $q\to \infty$, by \cite{LPS}, the girth of $X^{p,q}$ tends to infinity. 
	This implies that the Ihara zeta function $Z_{X^{p,q}}(u)$ tends to 
	that of the $(p+1)$-regular tree. So, we have 
	$\lim_{q\to \infty}Z_{X^{p,q}}(u)=1$ and $\lim_{q\to\infty}Z'_{X^{p,q}}(u)=0$. Thus, \eqref{a(p)-Z-2} holds.  
	\end{comment}
\end{proof}

We remark that Corollary \ref{cor:LPS} is also proved by
$\lim_{t\rightarrow 1}(t-1)\varphi(t)=0$ in Proposition \ref{generating}.

%%%%%%%%%%%%%%%%%%%%%%%%%%%

We have investigated the average of $\frac{N_{m}}{q^{m/2}}$ for the LPS Ramanujan graph, 
and that of Fourier coefficients of the cusp form related with the LPS Ramanujan graph.
Before ending this section, we remark
the estimate of $N_m$ itself by Huang \cite{Huang},
whose motivation is quite different from ours.

%%%%%%%%%%%%
\begin{remark} %\label{Huang}
In 2020, 
Huang established a graph-theoretic analogue of Li's criterion 
for the Ihara zeta function $Z_{G}(u)$ {\rm(}\cite[Theorem 1.3]{Huang}{\rm)}.
Namely, he studied the characterization of Ramanujan graphs
in terms of the sequence $\{ h_{m} \}_{m \geq 1}$,
which is defined as the coefficients of $\frac{d}{du}\log\Xi(q^{-1/2}u)$
with $\Xi(u)$ being a variant of the Ihara zeta function $Z_G(u)$
as in \cite[Definition 1.1]{Huang}.
More precisely, he showed that a connected $(q+1)$-regular graph $G$ 
on $n \ ( \geq 3)$ vertices is Ramanujan if and only if the inequality $h_{m} \geq 0$ holds 
for infinitely many even $m \geq 2$.
Here the sequence $\{ h_{m} \}_{m \geq 1}$ is described by \cite[Proposition 3.1]{Huang} as
\begin{align*}
h_{m} &= 
\begin{cases}
2 (n-1) + \dfrac{n e_{m} (q-1)}{q^{m/2}} + 2 T_{m}(\frac{q+1}{2 \sqrt{q}}) 
- \dfrac{N_{m}}{q^{m/2}}, & \mbox{if $G$ is non-bipartite}, \\ % \\[1.5ex] 
2 (n-2) + \dfrac{n e_{m} (q-1)}{q^{m/2}} + 4 e_{m} T_{m}(\frac{q+1}{2\sqrt{q}}) 
- \dfrac{N_{m}}{q^{m/2}}, & \mbox{if $G$ is bipartite.}
\end{cases} \\
% &= 2l' + \frac{n e_{m} (q-1)}{q^{m/2}}
% + 2 \sum_{\lambda \in \Spec(\A) \atop |\lambda|=q+1}
% T_{m} \left( \frac{\lambda}{2\sqrt{q}} \right) 
% -\frac{N_m}{q^{m/2}}. 
\end{align*}
% Here $l' := \# \{ \lambda \in \Spec(\A) \ | \ |\lambda| \neq q+1 \}$ denotes 
% the number of elements in the multiset. 
As a result, the estimate of $N_m$ itself for a Ramanujan graph as in \cite[Theorem 1.4]{Huang} is given by transforming the inequality $h_m\ge0$.
\end{remark}
%%%%%%%%%%%

%%%%%%%%%%%%%%%%%%%%%%%%%%%%%%%%%
%%%%%%%%%%%%%%%%%%%%%%%%%%%%%%%%%
\section*{Acknowledgements}
%%%%%%%%%%%%%%%%%%%%%%%%%%%%%%%%%
The authors would like to thank the anonymous referee
for careful reading and suggesting improvements of some statements. 
Takehiro Hasegawa was partially supported by JSPS KAKENHI (grant numbers 19K03400 and 22K03246).
Hayato Saigo was partially supported by JSPS KAKENHI (grant numbes 19K03608 and 22K03405) and by Research Origin for Dressed Photon.
Seiken Saito was partially supported by JSPS KAKENHI (grant numbers 19K03608 and 22K03405) and by Research Origin for Dressed Photon.
Shingo Sugiyama was partially supported by JSPS KAKENHI (grant number 20K14298).
%%%%%%%%%%%%%%%%%%%%%%%%%%%%%%%%%
\section*{Data Availability Statement}
No data were generated or used in the preparation
of this paper.

%%%%%%%%%%%%%%%%%%%%%%%%%%%%%%%%%
\section*{Declarations}
{\bf Conflict of interest}\  The authors confirm that they have no conflict of interest
in connection with this paper.

%%%%%%%%%%%%%%%%%%%%%%%%%%%%%%%%%%%%%%%%%%%%%%%%%%%

\end{document}